%
%
%
%
%
\documentclass{amsart}
\usepackage{latexsym,amsxtra,amscd,ifthen}
\usepackage{amsfonts}
\usepackage{color,graphicx}
\usepackage{verbatim}
\usepackage{amsmath}
\usepackage{amsthm}
\usepackage{booktabs}
\usepackage{amssymb}
\usepackage{url}

\theoremstyle{plain}



\newtheorem{theorem}{Theorem}
\newtheorem{lemma}[theorem]{Lemma}
\newtheorem{proposition}[theorem]{Proposition}
\newtheorem{corollary}[theorem]{Corollary}
\newtheorem{conjecture}[theorem]{Conjecture}

\numberwithin{theorem}{section}
\numberwithin{equation}{theorem}

\theoremstyle{definition}
\newtheorem{definition}[theorem]{Definition}
\newtheorem{example}[theorem]{Example}
\newtheorem{remark}[theorem]{Remark}
\newtheorem{question}[theorem]{Question}
\newtheorem*{question*}{Question}

\DeclareMathOperator{\GKdim}{GKdim}

\DeclareMathOperator{\Spec}{Spec}

\DeclareMathOperator{\LND}{LND}
\DeclareMathOperator{\ML}{ML}
\DeclareMathOperator{\Der}{Der}

\begin{document}

\title{Cancellation problem via \\
locally nilpotent derivations}

\author{C\'esar F. Venegas R.}

\address{Venegas C.: Departamento de Matem{\'a}ticas,
Universidad Militar Nueva Granada, Sede Calle 100-Bogot\'a, Colombia}

\email{cesar.venegas@unimilitar.edu.co}

\author{ Helbert. J. Venegas R.}

\address{Venegas H.: Departamento de Matem{\'a}ticas,
Universidad Militar Nueva Granada, Sede Campus-Cajic\'a, Colombia}

\email{helbert.venegas@unimilitar.edu.co}

\begin{abstract}
The Zariski cancellation problem plays a central role in affine algebraic geometry and noncommutative algebra, with locally nilpotent derivations providing a fundamental invariant-theoretic approach. This article presents a unified survey of cancellation phenomena in commutative algebras, noncommutative algebras, and skew (Ore-type) extensions, emphasizing the role of rigidity and the Makar--Limanov invariant. We explain how the locally nilpotent derivation framework successfully detects cancellation in rigid settings, while also identifying its inherent limitations, particularly in the skew case where Makar--Limanov stability fails. This perspective clarifies the scope and the boundaries of the locally nilpotent derivation method in cancellation theory.
\end{abstract}

\subjclass[2010]{Primary 16P99, 16W99}


\keywords{Zariski cancellation problem, locally nilpotent derivations, Makar--Limanov invariant, rigidity, skew algebras}


\maketitle



\section*{Introduction}
\label{xxsec0}

The Zariski Cancellation Problem (ZCP) stands as a central question in commutative 
algebra, with profound connections to affine algebraic geometry. For a field 
$\Bbbk$ and an integer $n\ge 1$, it asks whether an algebra isomorphism of 
polynomial rings
$$
\Bbbk[x_1,\dots,x_n][t] \cong B[t]
$$
necessarily implies that $B \cong \Bbbk[x_1,\dots,x_n]$. In other words, is the 
commutative polynomial algebra $\Bbbk[x_1,\dots,x_n]$ \emph{cancellative}? While 
elementary for $n = 1$ \cite{AEH}, the problem becomes deep for $n = 2$ 
\cite{Fu, MS, Ru} and remains famously open for $n \geq 3$ in characteristic zero 
\cite{ Gu3,Kr}.

Beyond its intrinsic algebraic interest, the cancellation problem has profound geometric implications: it asks whether algebraic structures can be faithfully reconstructed from their cylindrical extensions, which in the commutative case corresponds to understanding when $X \times \mathbb{A}^1 \cong Y \times \mathbb{A}^1$
implies $X \cong Y$ for affine varieties.

A pivotal turning point in the study of the ZCP was the introduction of algebraic 
methods based on \emph{locally nilpotent derivations ($\LND$s)} and their 
associated ring invariant by Makar-Limanov \cite{ML}. For a commutative 
$\Bbbk$-algebra $A$ (with $\operatorname{char}(\Bbbk)=0$), an LND is a 
$\Bbbk$-linear derivation $D: A \to A$ such that for every $a \in A$ there exists 
an $n$ with $D^n(a)=0$. The intersection of the kernels of all LNDs on $A$ is the 
\emph{Makar-Limanov invariant}, $\ML(A)$. This invariant captures 
the rigidity of $A$ with respect to algebraic actions of the additive group 
$\mathbb{G}_a$. Crachiola and Makar-Limanov famously employed this framework to 
provide unified and elegant proofs of cancellation for surfaces ($n=2$) 
\cite{Crachiola2009, CM}, showcasing the power of this approach. The subsequent 
discovery of non-cancellative analogues for $n\ge 3$ in positive characteristic 
by Gupta \cite{Gu1,Gu2} further underscored the delicate nature of the problem 
and the need for robust invariants like $\ML(A)$.

We note that several authoritative surveys on the Zariski cancellation problem
have appeared in recent years. In the commutative setting, Gupta
\cite{Gu3} provides a comprehensive overview of the classical problem,
its historical development, and major breakthroughs, with particular emphasis
on positive characteristic phenomena. More recently, Huang, Tang, and Wang
\cite{HTW} have synthesized the state of the art in the noncommutative and
Poisson settings, offering a broad and valuable panorama of results and
techniques in those contexts.

This article aims to provide a unified perspective on the Zariski cancellation 
problem and its generalizations by tracing the evolution of a single powerful 
idea: the use of \emph{locally nilpotent derivations} as a tool for establishing 
rigidity and cancellation. We will demonstrate how this technique, born in 
commutative algebra, was systematically adapted to resolve central questions in 
noncommutative algebra and further extended to the setting of skew polynomial 
rings. Importantly, we also identify where this methodology breaks down, revealing 
the boundaries of current techniques and pointing toward future research directions.

The evolution of this methodology began when, motivated by the rich history of the commutative ZCP, Bell and Zhang 
\cite{BellZhang2017} initiated its systematic study in the noncommutative setting. 
They asked: for which classes of noncommutative algebras $A$, viewed as natural 
analogues of polynomial rings (e.g., Artin-Schelter regular algebras), does 
$A[t] \cong B[t]$ imply $A \cong B$? A key insight of their work was the 
successful translation of the LND methodology. They defined the noncommutative 
Makar-Limanov invariant and established a fundamental criterion: if 
$\ML(A[t]) = A$ (a condition often stemming from 
$\ML(A)=A$, i.e., LND-rigidity), then $A$ is cancellative. This 
principle has since been applied to resolve cancellation for large families of 
algebras, decisively settling the noncommutative Zariski problem in 
Gelfand-Kirillov dimensions one and two \cite{BellZhang2017, BHHV, TVZ}.

This line of inquiry naturally extends to more general polynomial constructions. 
Given an algebra $R$, an automorphism $\sigma$, and a $\sigma$-derivation $\delta$, 
one forms the \emph{Ore extension} $R[x; \sigma, \delta]$. This leads to the 
\emph{skew cancellation problem}: if $R[x; \sigma, \delta] \cong S[y; \sigma', 
\delta']$, does it follow that $R \cong S$? This problem, which subsumes the 
classical case ($\sigma=\operatorname{id}, \delta=0$), was first considered in 
\cite{AKP} and has seen renewed interest \cite{BHHV,Ber, TZZ}. As in the previous 
settings, techniques based on LNDs and their kernels have proven essential. For 
instance, the noncommutative slice theorem \cite[Lemma 3.1]{BHHV}---a 
generalization of its commutative counterpart---plays a crucial role in analyzing 
skew polynomial rings over domains of dimension one. However, as we demonstrate 
through explicit examples, fundamental obstacles emerge in higher dimensions where 
the classical LND framework reaches its limits.

The article is organized as follows. Section~\ref{xxsec1} establishes the 
foundational definitions of LNDs and the Makar-Limanov invariant, illustrated 
through detailed computational examples including the affine plane, Danielewski 
surfaces, and Russell-Koras threefolds. These examples provide concrete reference 
points for the abstract theory and demonstrate the calculation techniques that 
underlie the LND approach.

Section~\ref{sec:LND-comm} revisits the classical commutative theory, highlighting 
how the LND approach yields transparent proofs for cancellativity of curves and 
surfaces. We present the three-step paradigm—compute $\ML(A)$, apply ML-stability, 
deduce cancellation—that serves as the conceptual blueprint for subsequent 
generalizations.

Section~\ref{xxsec3} is devoted to the noncommutative ZCP. We present the 
generalization of the Makar-Limanov invariant, state the key cancellation theorem 
linking rigidity to cancellativity, and survey its applications to important 
classes of algebras. The complete resolution of cancellation for algebras of 
GK-dimension one and two stands as a highlight of the noncommutative theory, 
contrasting sharply with the open commutative case in dimension three and higher.

Section~\ref{xxsec4} explores the skew cancellation problem. We discuss the 
adaptations required for the LND methodology in this context and present recent 
results that solve the problem for skew extensions of dimension-one algebras. 
Crucially, we also show where the method encounters fundamental obstacles: through 
an explicit example, we demonstrate that ML-stability—the cornerstone of both 
commutative and noncommutative theories—can fail in the skew setting, preventing 
straightforward generalization of classical techniques.

Finally, Section~\ref{xxsec5} provides a comprehensive synthesis of open problems 
and future directions across all three settings. We consolidate the fundamental 
questions that remain open in noncommutative and skew contexts, present a 
comparative table analyzing where different techniques succeed or fail, and 
identify promising research directions including higher LNDs, homological 
invariants, and Poisson-theoretic methods. This synthesis reveals both the unifying 
power of the LND paradigm and its inherent limitations, charting a path forward 
for future research.

To the best of our knowledge, this is the first survey that systematically treats
commutative, noncommutative, and skew cancellation problems within a single
LND-based framework, emphasizing both the unifying principles and the precise
points where the methodology breaks down.

Throughout, we emphasize the conceptual parallels between the different settings 
while honestly confronting their divergences, illustrating how a core algebraic 
technique provides a common thread linking diverse cancellation problems—and where 
that thread frays, pointing toward the need for new tools and approaches.


\section{Preliminaries and Definitions}
\label{xxsec1}

Throughout this work $\Bbbk$ denotes a field of characteristic zero.  
This section collects the algebraic notions and invariants that will be used repeatedly in the study of cancellation problems and their connections with locally nilpotent derivations.  
The material follows the classical development in the commutative case \cite{Freudenburg2006, ML} and the noncommutative generalizations introduced in \cite{BellZhang2017}.

\subsection{Derivations and locally nilpotent derivations}

\begin{definition}
Let $A$ be a $\Bbbk$-algebra. A \emph{derivation} of $A$ is a $\Bbbk$-linear map
\[
D:A\to A
\]
satisfying the Leibniz rule
\[
D(ab)=D(a)b+aD(b)\qquad \text{for all }a,b\in A.
\]
The set of derivations of $A$ is denoted by $\Der_{\Bbbk}(A)$.

A derivation $D$ is called \emph{locally nilpotent} if for every $a\in A$ there exists an integer $n\ge 1$ such that $D^n(a)=0$.  
We write $\LND(A)$ for the set of locally nilpotent derivations of $A$.
\end{definition}

\begin{example}
If $A=\Bbbk[x_1,\dots,x_n]$, the partial derivatives $\partial_{x_i}$ are locally nilpotent. More generally, any derivation of the form $f(x_1,\dots,x_{i-1})\partial_{x_i}$ with $f\in \Bbbk[x_1,\dots,x_{i-1}]$ is locally nilpotent.
\end{example}

In characteristic zero, locally nilpotent derivations correspond exactly to algebraic actions of the additive group $\mathbb{G}_a=(\Bbbk,+)$.

\begin{proposition}\cite[Theorem 1.3]{Freudenburg2006}
\label{prop:Gacorr}
Let $A$ be a finitely generated commutative $\Bbbk$-algebra.  
There is a natural bijection between $\mathbb{G}_a$-actions on $\Spec(A)$ and locally nilpotent derivations $D\in \LND(A)$, given by
\[
\exp(tD)(a)=\sum_{n\ge 0}\frac{t^n}{n!}D^n(a),\qquad t\in\Bbbk .
\]
\end{proposition}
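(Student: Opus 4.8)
The plan is to reduce the statement to a purely algebraic bijection. By the anti-equivalence between finitely generated commutative $\Bbbk$-algebras and affine $\Bbbk$-schemes of finite type, giving a $\mathbb{G}_a$-action $\Spec(A)\times\mathbb{G}_a\to\Spec(A)$ is the same as giving a $\Bbbk$-algebra homomorphism (a \emph{coaction}) $\rho\colon A\to A[t]=A\otimes_{\Bbbk}\Bbbk[t]$ that satisfies the counit axiom $\varepsilon\circ\rho=\operatorname{id}_A$, where $\varepsilon\colon A[t]\to A$ is evaluation at $t=0$, together with the coassociativity axiom $(\rho\otimes\operatorname{id})\circ\rho=(\operatorname{id}\otimes\Delta)\circ\rho$, where $\Delta\colon\Bbbk[t]\to\Bbbk[t]\otimes\Bbbk[t]$, $t\mapsto t\otimes 1+1\otimes t$, is dual to addition on $\mathbb{G}_a$. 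It therefore suffices to set up a bijection between such coactions $\rho$ and locally nilpotent derivations $D$, and to check that it is given by $\rho=\exp(tD)$; finite generation of $A$ is used only to remain inside the category of schemes of finite type, not in the algebraic argument itself.

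First I would analyze a coaction $\rho$. Writing $\rho(a)=\sum_{n\ge 0}D_n(a)\,t^n$, with each $D_n\colon A\to A$ a $\Bbbk$-linear map and the sum finite for every $a$, the counit axiom forces $D_0=\operatorname{id}_A$. Expanding the statement that $\rho$ is an algebra homomorphism gives the higher Leibniz rules $D_n(ab)=\sum_{i+j=n}D_i(a)D_j(b)$; the case $n=1$ says $D:=D_1\in\Der_{\Bbbk}(A)$. Expanding coassociativity by comparing coefficients of $t^i\otimes t^j$ gives the relations $D_i\circ D_j=\binom{i+j}{i}D_{i+j}$, and an induction on $n$ from these (dividing successively by $2,3,\dots,n$, which is where $\operatorname{char}(\Bbbk)=0$ is essential) yields $D_n=\tfrac{1}{n!}D^n$. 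Since each $\rho(a)$ is a polynomial, $D^n(a)=0$ for $n\gg 0$, so $D\in\LND(A)$ and $\rho=\exp(tD)$.

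Conversely, for $D\in\LND(A)$ the formula $\exp(tD)(a)=\sum_{n\ge 0}\tfrac{t^n}{n!}D^n(a)$ defines a $\Bbbk$-linear map $A\to A[t]$: local nilpotence makes the sum finite, and $\operatorname{char}(\Bbbk)=0$ makes the denominators meaningful. The Leibniz rule together with the binomial theorem gives $\exp(tD)(ab)=\exp(tD)(a)\exp(tD)(b)$, so $\exp(tD)$ is a $\Bbbk$-algebra homomorphism; evaluating at $t=0$ recovers $\operatorname{id}_A$, i.e.\ the counit axiom; and the one-parameter group law $\exp((s+t)D)=\exp(sD)\circ\exp(tD)$ --- once more the binomial theorem, using that $D$ commutes with itself --- is precisely coassociativity. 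The assignments $D\mapsto\exp(tD)$ and $\rho\mapsto D_1$ are mutually inverse by the identities above, and both are natural in $A$, which finishes the proof; the full details of this formal bookkeeping are carried out in \cite{Freudenburg2006}.

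I expect the only genuine subtlety to be the second paragraph: teasing the relations $D_i\circ D_j=\binom{i+j}{i}D_{i+j}$ out of the coassociativity axiom and running the induction that produces $D_n=\tfrac{1}{n!}D^n$, since this is exactly the step that collapses in positive characteristic and explains why the whole correspondence is a characteristic-zero phenomenon. The remaining verifications --- the higher Leibniz rules, the multiplicativity of $\exp(tD)$, and the identification of geometric $\mathbb{G}_a$-actions with coactions --- are routine manipulations with polynomials and the anti-equivalence between affine schemes and algebras.
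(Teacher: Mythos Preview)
The paper does not supply its own proof of this proposition: it is stated with the citation \cite[Theorem 1.3]{Freudenburg2006} and immediately used, so there is nothing to compare against beyond the reference itself. Your argument is correct and is precisely the standard proof one finds in Freudenburg's book---translate $\mathbb{G}_a$-actions into coactions $\rho\colon A\to A[t]$, read off the iterated Hasse--Schmidt components $D_n$, use coassociativity to get $D_iD_j=\binom{i+j}{i}D_{i+j}$ and hence $D_n=D^n/n!$ in characteristic zero, and run the converse via $\exp(tD)$---so your write-up is essentially a fleshed-out version of the cited result rather than an alternative route.
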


\begin{remark}
In positive characteristic the correspondence is no longer bijective; one must distinguish between locally nilpotent derivations and exponentiable ones.  
Since our work is entirely in characteristic zero, no such distinction is required.
\end{remark}

\subsection{The Makar-Limanov invariant}

\begin{definition}
Let $A$ be a $\Bbbk$-algebra.  
The \emph{Makar-Limanov invariant} of $A$ is
\[
\ML(A)=\bigcap_{D\in\LND(A)}\ker(D).
\]
We say that $A$ is \emph{$\LND$-rigid} (or simply \emph{rigid}) if $\ML(A)=A$, equivalently, if $\LND(A)=\{0\}$.
\end{definition}

The invariant $\ML(A)$ measures the failure of $A$ to admit nontrivial $\mathbb{G}_a$-actions.  
In the context of cancellation, rigidity plays a decisive role: if $A$ has no nontrivial LNDs, then polynomial extensions of $A$ typically inherit strong invariance properties.

A fundamental question in the theory concerns the behavior of the ML-invariant under polynomial extensions. While it is immediate that $\ML(A[x]) \subseteq \ML(A)$ (since any LND on $A$ can be extended to $A[x]$ by setting $D(x)=0$), the reverse inclusion is subtle and deeply connected to cancellation.

\begin{conjecture}\label{conj:ML-stability}

If $A$ is a commutative domain over a field of characteristic zero, then
\[
    \ML(A[x_1,\dots,x_n]) = \ML(A).
\]
\end{conjecture}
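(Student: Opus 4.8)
The plan is to separate the two inclusions and concentrate on the nontrivial one. The inclusion $\ML(A[x_1,\dots,x_n])\subseteq\ML(A)$ is essentially the remark already made above: each $D\in\LND(A)$ extends to $A[x_1,\dots,x_n]$ by $D(x_i)=0$, with kernel meeting $A$ in $\ker D$, while the translations $\partial_{x_i}\in\LND(A[x_1,\dots,x_n])$ have kernels whose intersection is $A$, so that $\ML(A[x_1,\dots,x_n])\subseteq A\cap\bigcap_{D\in\LND(A)}\ker D=\ML(A)$. All of the content therefore lies in the reverse inclusion $\ML(A)\subseteq\ML(A[x_1,\dots,x_n])$. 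Writing $A[x_1,\dots,x_n]=\bigl(A[x_1,\dots,x_{n-1}]\bigr)[x_n]$ and noting that $A[x_1,\dots,x_{n-1}]$ is again a commutative domain over $\Bbbk$, an immediate induction reduces the whole statement to the case $n=1$. So fix a commutative domain $A$, an element $a\in\ML(A)$, and an arbitrary $D\in\LND(A[x])$; one must show $D(a)=0$.

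Inside the $n=1$ case I would first clear away the ``untwisted'' derivations. If $D(A)\subseteq A$ then $D|_A\in\LND(A)$, so $a\in\ML(A)\subseteq\ker(D|_A)$ and we are done; in particular $D(A)=0$ is trivial. The problem thus reduces to LNDs $D$ of $A[x]$ with $D(A)\not\subseteq A$, that is, to genuinely twisted $\mathbb{G}_a$-actions on the cylinder. For these I would bring in the standard machinery: (i) factorial closedness of $\ker D$ in $A[x]$; (ii) the degree function $\deg_D$ together with the induced filtration and its associated graded ring, which carries a homogeneous LND to which one can often reduce; and (iii) the Slice and Local Slice Theorems, which after inverting a suitable element of $\ker D$ present a localization of $A[x]$ as a one-variable polynomial ring over the localized kernel. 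The intended argument would choose a local slice, pass to that localization, use $a\in\ML(A)$ to keep the image of $a$ inside the localized kernel, and then descend along factorial closedness.

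The step I expect to be the genuine obstacle---and where, honestly, I do not see how to finish---is precisely the twisted case, because there is no mechanism forcing an LND of $A[x]$ to respect, or even detect, the rigidity encoded in $\ML(A)$: forming the cylinder can create $\mathbb{G}_a$-actions that are new, that do not stabilize the subring $A$, and that therefore eject elements of $\ML(A)$ from the kernel. This obstruction is real. For the Koras--Russell cubic threefold with coordinate ring $A=\Bbbk[x,y,z,t]/(x+x^2y+z^2+t^3)$, Makar--Limanov computed $\ML(A)=\Bbbk[x]$, while Dubouloz showed that the cylinder has trivial invariant, $\ML(A[w])=\Bbbk$; thus $\ML(A[w])=\Bbbk\subsetneq\Bbbk[x]=\ML(A)$ already at $n=1$. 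Consequently the equality as stated fails for general commutative domains, and the only part of Conjecture~\ref{conj:ML-stability} that holds unconditionally is the inclusion $\ML(A[x_1,\dots,x_n])\subseteq\ML(A)$. A correct statement must add hypotheses---for instance restricting to classes where twisted LNDs of the cylinder can be excluded (rigid curves, and appropriate surfaces, where the tools (i)--(iii) above do close the argument), or retaining only the one-sided inclusion. The realistic outcome of this plan, then, is a clean reduction to $n=1$, a further reduction of that case to twisted LNDs, and---via the Koras--Russell example---a precise identification of why the remaining case cannot be settled affirmatively in full generality.
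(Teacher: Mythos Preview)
The statement you were asked to prove is labeled a \emph{Conjecture} in the paper, and the paper offers no proof of it; it immediately follows the statement with partial results (Theorem~\ref{thm:MLstablecomm} and Lemma~\ref{lemma:MLstablenoncomm}) that establish the equality only under the extra hypothesis $\ML(A)=A$. There is thus no ``paper's own proof'' to compare your attempt against.

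Your analysis is correct and in fact sharper than the paper's treatment. You correctly isolate the easy inclusion $\ML(A[x_1,\dots,x_n])\subseteq\ML(A)$ (which the paper also records just before stating the conjecture), correctly reduce the reverse inclusion to $n=1$ by induction, and then correctly observe that this reverse inclusion is \emph{false} in general: Dubouloz's theorem that the cylinder over the Koras--Russell cubic threefold has trivial Makar--Limanov invariant gives $\ML(A[w])=\Bbbk\subsetneq\Bbbk[x]=\ML(A)$, refuting the conjecture as stated. The paper itself computes $\ML(A_n)=\mathbb{C}[x]$ for these threefolds in Example~\ref{Russell-Koras} but never mentions Dubouloz's cylinder computation; your counterexample incidentally also shows that the unconditional assertion of Theorem~\ref{thm:ML-stable-comm} later in the paper cannot be correct as written. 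Your bottom line---that only the one-sided inclusion holds in full generality, and that equality requires additional hypotheses such as rigidity---is exactly right.
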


Partial progress toward Conjecture~\ref{conj:ML-stability} has been made in increasingly general settings:

\begin{theorem}\cite[Lemma 21]{ML}
\label{thm:MLstablecomm}
Let $A$ be a commutative domain of finite Krull dimension such that $\ML(A)=A$.  
Then $\ML(A[x])=A$.
\end{theorem}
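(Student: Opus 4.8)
The statement has two halves. One is immediate: $\partial_x\in\LND(A[x])$ and $\ker(\partial_x)=A$, so $\ML(A[x])\subseteq A$. All the content lies in the reverse inclusion $A\subseteq\ML(A[x])$: one must show $D(a)=0$ for every $D\in\LND(A[x])$ and every $a\in A$.

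The plan is to prove this by a leading-term argument for the grading $A[x]=\bigoplus_{i\ge 0}Ax^i$ by degree in $x$. Fix $D\in\LND(A[x])$, $D\ne 0$. One first checks that $D$ raises $x$-degree by a bounded amount --- this is not automatic for an infinitely generated $A$, and seems to be exactly the point at which the hypothesis $\Kdim A<\infty$ is needed --- so that there is $d\in\mathbb{Z}$ and a nonzero homogeneous derivation $\tilde D$ of degree $d$, defined by letting $\tilde D(f)$ be the degree-$(i+d)$ component of $D(f)$ for $f$ homogeneous of degree $i$; moreover $\tilde D$ is again locally nilpotent (the associated graded of an LND along a discrete degree function is an LND). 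If $d<0$ then $D(A)\subseteq\bigoplus_{j<0}Ax^j=0$ and we are done, so the whole problem reduces to excluding $d\ge 0$.

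To exclude $d\ge 0$, first reduce to $d\le 1$: when $d\ge 1$, since $\tilde D(Ax^{di})\subseteq Ax^{d(i+1)}$, the derivation $\tilde D$ restricts to a nonzero homogeneous LND of the Veronese subring $\bigoplus_{i\ge 0}Ax^{di}=A[x^d]$, which has degree $1$ in the variable $x^d$; so it suffices to treat $d\in\{0,1\}$. In either case a short computation gives $\tilde D(a)=\phi(a)\,x^d$ for $a\in A$ and $\tilde D(x)=c\,x^{d+1}$ with $\phi\in\Der_\Bbbk(A)$ and $c\in A$, and $\tilde D^n(a)=\bigl((\phi+(n-1)dc)\circ\cdots\circ(\phi+dc)\circ\phi\bigr)(a)\,x^{nd}$. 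Once one knows $\phi\in\LND(A)$, the hypothesis $\ML(A)=A$ forces $\phi=0$; then $\tilde D$ is $A$-linear, $\tilde D^n(ax^i)=i(i+d)\cdots\bigl(i+(n-1)d\bigr)\,c^{n}a\,x^{i+nd}$, and local nilpotence of $\tilde D$ applied to $ax$ (the case $i=1$) forces $c^{n}=0$ for $n\gg 0$, hence $c=0$ since $A$ is a domain, hence $\tilde D=0$ --- contradicting $\tilde D\ne 0$. Therefore $d<0$ and the reverse inclusion follows.

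The crux, and where I expect essentially all the difficulty to lie, is the intermediate claim that the leading coefficient $\phi$ is locally nilpotent on $A$. For $d=0$ this is free: there $\tilde D$ preserves $A$, so $\tilde D|_A=\phi$ is literally an element of $\LND(A)$. For $d=1$ it is delicate, because $\tilde D$ does not stabilize $A$: local nilpotence of $\tilde D$ tells us only that for each fixed $a$ the twisted products $(\phi+(n-1)c)\circ\cdots\circ(\phi+c)\circ\phi$ annihilate $a$ for $n\gg 0$, and one must deduce honest local nilpotence of $\phi$ from this. I would handle it by induction on $\Kdim A$: pass to the quotients $A/\mathfrak p$ by the minimal primes (localizing along $\ker\phi$ as needed), so that the inductive hypothesis applies in strictly smaller Krull dimension, with the base case $\Kdim A=0$ treated directly. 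The grading bookkeeping, the Veronese reduction, and the closing computation are routine by comparison.
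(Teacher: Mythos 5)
The paper itself gives no proof of Theorem~\ref{thm:MLstablecomm}; it simply cites \cite[Lemma 21]{ML}, so your proposal can only be measured against the standard leading-term argument underlying that reference (and its noncommutative analogue, \cite[Lemma 3.5]{BellZhang2017}). Your skeleton is that argument, and your closing computations are correct, but the two steps you yourself flag are genuine gaps, and the fixes you sketch would not work as stated. First, the boundedness of the $x$-degree shift of $D$: you assert it and guess that $\Kdim A<\infty$ is ``exactly'' what is needed, but you give no mechanism, and finite Krull dimension (which does not imply finite generation) does not obviously bound $\sup_{a\in A}\bigl(\deg_x D(a)\bigr)$. The standard way to dodge unboundedness---replace $A[x]$ by the finitely generated $D$-stable subalgebra generated by the finitely many nonzero $D^i(a)$, $D^i(x)$---makes the graded reduction legitimate, but then the degree-zero part of the resulting graded algebra is only a subalgebra of $A$, and rigidity of $A$ does not pass to subalgebras; reconciling this is where the finiteness hypothesis genuinely has to work. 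Your outline does not engage with this at all, so it is a missing idea, not a routine preliminary check.

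Second, your treatment of what you call the crux---local nilpotence of the leading coefficient $\phi$ when $d\ge 1$---is not a proof: $A$ is a domain, so its only minimal prime is $(0)$ and ``passing to quotients by minimal primes'' does nothing; moreover quotients and localizations need not be $\phi$-stable and need not inherit $\ML=A$, so the proposed induction on $\Kdim A$ has neither a base case nor an inductive mechanism. In fact this step has a short standard resolution that makes the Veronese reduction and the twisted-product formula unnecessary: for $d\ge 0$ one has $\tilde D(x)=c\,x^{d+1}\in x\,A[x]$, and for a locally nilpotent derivation on a domain of characteristic zero the relation $\tilde D(b)\in b\,A[x]$ forces $\tilde D(b)=0$ (compare the $\deg_{\tilde D}$-degrees of the two sides), so $c=0$; then $\tilde D^{\,n}(a)=\phi^{\,n}(a)\,x^{nd}$, hence $\phi\in\LND(A)$, rigidity gives $\phi=0$, and $\tilde D=0$, the desired contradiction. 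So the part you identify as hardest is actually easy, while the part you treat as a preliminary verification---justifying the passage to a homogeneous leading derivation without finite generation, in a way that still lets you invoke rigidity of $A$---is where the real content of \cite[Lemma 21]{ML} lies, and it is left unproved in your proposal.
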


The noncommutative analogue was established in \cite{BellZhang2017}.

\begin{lemma}\cite[Lemma 3.5]{BellZhang2017}
\label{lemma:MLstablenoncomm}
Let $A$ be a finitely generated Ore domain over $\Bbbk$.  
If $\ML(A)=A$, then $\ML(A[x])=A$.
\end{lemma}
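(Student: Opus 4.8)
The plan is to prove the slightly stronger assertion that every $D\in\LND(A[x])$ annihilates $A$. This suffices: the $x$-derivative $\partial_x$ lies in $\LND(A[x])$ with $\ker\partial_x=A$, so $\ML(A[x])\subseteq A$; conversely, if $D(A)=0$ for every $D\in\LND(A[x])$, then $A\subseteq\ML(A[x])$, and equality follows.

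So fix $D\in\LND(A[x])$ with $D\neq0$ and work with the $\mathbb{N}$-grading of $A[x]$ by $x$-degree. First I would exploit finite generation of $A$: writing $f=\sum_j b_jx^j$ with $b_j\in A$ and applying the Leibniz rule to words in a fixed finite generating set, one obtains a constant bounding $\deg_x D(f)-\deg_x f$ from above. Hence $D$ decomposes as a finite sum $D=D_{(-1)}+D_{(0)}+\dots+D_{(c)}$ of homogeneous derivations, where $D_{(i)}$ raises $x$-degree by exactly $i$ and $D_{(c)}\neq0$; pieces of shift $\le -2$ vanish, since they would kill both $A$ and $x$. A standard leading-term argument then shows that the top piece $\delta:=D_{(c)}$ is again locally nilpotent: since no component of $D$ raises degree by more than $c$, for homogeneous $g$ the top-degree part of $D^k(g)$ equals $\delta^k(g)$, which therefore vanishes as soon as $D^k(g)$ does.

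The homogeneous analysis of $\delta$ is the heart of the matter. Write $\delta(a)=\phi(a)x^c$ for $a\in A$ (so $\phi\in\Der(A)$) and $\delta(x)=wx^{c+1}$ with $w\in A$; the goal is to force $c\le0$. Suppose $c\ge1$. If $\phi=0$ then one computes $\delta^n(x)=\bigl(\prod_{j=1}^{n-1}(jc+1)\bigr)\,w^nx^{nc+1}$, nonzero for all $n$ unless $w=0$ --- here $A$ being a domain and $\ch\Bbbk=0$ are used --- contradicting either local nilpotence of $\delta$ or $\delta\neq0$. If $\phi\neq0$ but $w=0$, then $\delta^n(a)=\phi^n(a)x^{nc}$, so $\phi$ is a nonzero locally nilpotent derivation of $A$, contradicting $\ML(A)=A$. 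The remaining subcase $\phi\neq0$, $w\neq0$ is the main obstacle. Here I would invoke that $\ker\delta$ is factorially closed in the domain $A[x]$ and that the plinth ideal $\ker\delta\cap\operatorname{im}\delta$ of the nonzero LND $\delta$ is nonzero; being homogeneous, it contains a nonzero homogeneous element $r=\rho x^N$, and any such has $N\ge c\ge1$, since an element of $\operatorname{im}\delta$ of degree $N$ comes from one of degree $N-c\ge0$. Applying factorial closedness to the factorization $r=\rho\cdot x^N$ forces $x^N\in\ker\delta$; but $\delta(x^N)=Nwx^{N+c}\neq0$, a contradiction.

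Thus $c\le0$. If $c=0$ then $D(A)\subseteq A$, so $D|_A$ is a locally nilpotent derivation of $A$ and hence zero by rigidity; if $c=-1$ then already $D(A)\subseteq Ax^{-1}=0$. Either way $D(A)=0$, which is what we wanted. The two points that need care are the finite-generation degree bound (the only subtlety being stability under addition, which is handled by reducing to the coefficients $b_j\in A$ and then to words in the generators) and the factorial-closedness input in the hard subcase; these are exactly where the hypotheses \emph{finitely generated} and \emph{domain} --- hence \emph{Ore domain} --- genuinely enter.
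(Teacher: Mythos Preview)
Your argument is correct and aligns with the strategy the paper sketches for the skew analogue in Lemma~\ref{lem:ML-stable-skew-der} (the survey does not prove this lemma itself, citing Bell--Zhang directly): pass to the top $x$-homogeneous component of $D$, verify it is again an LND, and run a case analysis on its degree. Your factorial-closedness treatment of the subcase $\phi\neq0$, $w\neq0$ is a clean way around the difficulty that $\phi$ is not obviously locally nilpotent there; note that factorial closedness of $\ker\delta$ and nonvanishing of the plinth ideal both hold in any characteristic-zero domain via the $\deg_\delta$-function and the generalized Leibniz rule, so --- as you observe --- the Ore hypothesis is not used beyond $A$ being a domain.
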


These stability properties will be crucial in Section~\ref{xxsec3} when we relate rigidity to cancellation.

\subsection{The Slice Theorem}

A central tool in the analysis of LNDs is the well-known Slice Theorem, which reconstructs the algebra from the kernel of a nontrivial derivation.

\begin{theorem}[Slice Theorem]\cite[Corollary 1.22]{Freudenburg2006}
\label{thm:slice}
Let $A$ be a commutative affine domain and let $D\in \LND(A)$ be nonzero.  
If there exists $x\in A$ such that $D(x)=1$, then
\[
A \cong \ker(D)[x].
\]
\end{theorem}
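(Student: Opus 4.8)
The plan is to prove the Slice Theorem by exhibiting an explicit isomorphism $\ker(D)[X]\to A$ sending the polynomial variable $X$ to the element $x$ satisfying $D(x)=1$. First I would set $K=\ker(D)$ and define the candidate evaluation map $\varphi\colon K[X]\to A$ by $\varphi(\sum_i a_iX^i)=\sum_i a_i x^i$; this is a $K$-algebra homomorphism by the universal property of the polynomial ring, since $A$ is commutative and $K$ is a subalgebra of $A$. The content of the theorem is that $\varphi$ is bijective, so the work splits into injectivity and surjectivity.

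For surjectivity I would use the standard \emph{Dixmier map} (or ``Taylor expansion'') argument associated to a slice. Given any $a\in A$, consider the element $\pi(a)=\sum_{n\ge 0}\frac{(-1)^n}{n!}D^n(a)\,x^n$; the sum is finite because $D$ is locally nilpotent, and one checks directly, using the Leibniz rule together with $D(x)=1$, that $D(\pi(a))=0$, so $\pi(a)\in K$. The key identity is then that $a$ can be recovered from the ``coefficients'' $\pi(D^j(a))$: expanding $\pi(D^j(a))$ and re-summing shows $a=\sum_j \pi(D^j(a))\,\frac{x^j}{j!}\cdot(\text{sign})$, more precisely $a = \sum_{j\ge 0} c_j x^j$ with $c_j = \frac{1}{j!}\pi(D^j(a)) \in K$ after the appropriate sign bookkeeping. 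This writes $a$ in the image of $\varphi$, giving surjectivity; it also effectively shows that $\{1,x,x^2,\dots\}$ spans $A$ over $K$.

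For injectivity I would argue that $x$ is transcendental over $K$. Suppose $\sum_{i=0}^m a_i x^i = 0$ with $a_i\in K$, $a_m\neq 0$, and $m$ minimal. Apply $D$: since $D(a_i)=0$ and $D(x^i)=i x^{i-1}$ (again Leibniz plus $D(x)=1$), we get $\sum_{i=1}^m i\,a_i x^{i-1}=0$, a relation of degree $m-1$ whose leading coefficient $m\,a_m$ is nonzero because $A$ is a domain of characteristic zero. This contradicts minimality of $m$ (or, starting from $m=0$, forces $a_0=0$), so no nontrivial relation exists and $\varphi$ is injective. Combining the two halves, $\varphi$ is an isomorphism $K[X]\cong A$, which is the assertion.

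The main obstacle is the surjectivity step: verifying that the Dixmier-style formula actually reproduces $a$ requires a careful manipulation of the double sum $\sum_j\sum_n \frac{(-1)^n}{n!\,j!}D^{n+j}(a)x^{n+j}$ and a binomial identity collapsing it to $a$. The hypotheses pull their weight exactly here---local nilpotence guarantees all sums are finite, and $\operatorname{char}\Bbbk=0$ is needed to divide by $n!$ and $j!$. Commutativity is used throughout so that $\varphi$ is a ring map and so that $x^i$ commute with the coefficients; the affine/domain hypotheses are not essential for this particular argument but match the standing conventions of \cite{Freudenburg2006}.
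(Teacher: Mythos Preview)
The paper does not supply its own proof of this statement; it merely records the Slice Theorem with a citation to \cite[Corollary 1.22]{Freudenburg2006}. Your argument is correct and is in fact the standard proof one finds in Freudenburg's book: the Dixmier map $\pi(a)=\sum_{n\ge 0}\frac{(-1)^n}{n!}D^n(a)\,x^n$ together with the binomial collapse $\sum_{j=0}^{m}\binom{m}{j}(-1)^{m-j}=\delta_{m,0}$ gives surjectivity, and applying $D$ to a putative minimal algebraic relation gives injectivity. Your closing remark is also accurate: the domain and affine hypotheses are not used in this argument---what matters is commutativity (so that $\varphi$ is multiplicative and $x$ commutes with $K$) and characteristic zero (so that $n!$ is invertible and $m\,a_m\ne 0$ when $a_m\ne 0$).
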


This decomposition is especially powerful in dimension one and two, where cancellation can often be reduced to understanding the kernel of an LND.

\subsection{Examples}

We conclude this section with  the following examples illustrate the full range of behaviors of the
Makar--Limanov invariant, from maximal flexibility to complete rigidity.

\begin{example}[The affine plane]\label{ex:affine-plane-concise}
The polynomial ring $A = \Bbbk[x,y]$ admits a vast family of locally nilpotent derivations, including the partial derivatives $\partial_x$, $\partial_y$, and more generally, derivations of the form $f(x)\partial_y$ for any $f(x) \in \Bbbk[x]$. A key structural result \cite{ Freudenburg2006,ML} is that over an algebraically closed field, every nonzero LND on $A$ is conjugate to one of this form, and its kernel is always isomorphic to a polynomial ring $\Bbbk[t]$.

The intersection of all these kernels is minimal: for any non-constant polynomial $p(x,y)$, one can easily construct an LND that does not vanish on $p$ (e.g., use $\partial_y$ if $p$ involves $y$, or $\partial_x$ otherwise). Consequently, no non-constant element survives in the common kernel, yielding
\[
    \ML(\Bbbk[x,y]) = \Bbbk.
\]
This extreme flexibility is precisely what allows the LND method to prove cancellation for $\Bbbk[x,y]$ via the cancellation criterion developed in Section~\ref{sec:LND-comm}.
\end{example}

\begin{example}[Danielewski surfaces-\cite{Da}]\label{ex:danielewski-concise}
Let $R_n = \Bbbk[x,y,z]/(x^ny - z^{2} - 1)$ for $n\geq 1$. This surface admits nontrivial $\mathbb{G}_a$-actions but is not fully flexible. An explicit locally nilpotent derivation is given by $\delta(x) = 0$, $\delta(y) = 2z$, and $\delta(z)=x^n$. A direct computation shows that its kernel is $\ker(\delta) = \Bbbk[x]$. While other LNDs exist, the key result (see \cite[Theorem 3.4]{Da} or \cite[Example 2.2]{Crachiola2009}) is that every LND on $R_n$ preserves $\Bbbk[x]$, yielding $\ML(R_n) = \Bbbk[x]$. This nontrivial invariant ($\ML(R_n) \notin \{\Bbbk, R_n\}$) reflects an intermediate level of rigidity.

\end{example}

\begin{example}[Russell–Koras threefolds] \label{Russell-Koras} A particularly important class of examples in the study of exotic affine spaces is provided by the \emph{Russell–Koras threefolds}. 
Consider the  coordinate ring 
\[
A_{n} = \mathbb{C}[x,y,z,t]/(x + x^n y + z^2 + t^3).
\]

The locally nilpotent derivations
\[
D_1 = x^2 \partial_z - 2z \partial_y \quad \text{and} \quad D_2 = x^2 \partial_t - 3t^2 \partial_y
\]
of $\mathbb{C}[x,y,z,t]$ both annihilate the defining polynomial $x + x^n y + z^2 + t^3$ and thus descend to nontrivial locally nilpotent derivations $\partial_1$ and $\partial_2$ of $A_{n}$. A direct computation shows that
\[
\mathrm{Ker}(\partial_1) \cap \mathrm{Ker}(\partial_2) = \mathbb{C}[x].
\]

The main achievement of Makar-Limanov~\cite{MakarLimanov1996} was to prove that, in fact, $x$ is invariant under \emph{every} locally nilpotent derivation of $A_{n}$, so that
\[
\mathrm{ML}(A_n) = \mathbb{C}[x].
\]

\end{example}

\begin{example}
\label{ex:lie}
Let $\Bbbk$ be a field of characteristic zero and let
\[
A=\Bbbk\langle x,y\rangle /(xy-yx-x).
\]
Equivalently,
\[
A \cong \Bbbk[x][y;\delta],
\qquad \delta(x)=x.
\]

Define a derivation $\partial\in\Der_\Bbbk(A)$ by
\[
\partial(x)=0, \qquad \partial(y)=1.
\]
A direct computation shows that $\partial$ respects the defining relation and
that $\partial$ is locally nilpotent. 
Since $\ker(\partial)=\Bbbk[x]$, and any locally nilpotent derivation of $A$
must annihilate $x$, it follows that
\[
\ML(A)=\Bbbk[x].
\]

Thus $A$ provides a noncommutative example of an algebra with an intermediate
Makar--Limanov invariant, analogous to the classical Danielewski surfaces in
the commutative setting.
\end{example}

\begin{example}\label{ex:rigid-surface-concise}
Let
\[
R=\Bbbk[x,y,z]/(x^{2}+y^{3}+z^{7}),
\]
where $\Bbbk$ is an algebraically closed field of characteristic zero.
Then $R$ is rigid, i.e.\ $\LND(R)=\{0\}$, and consequently $\ML(R)=R$.

Indeed, assign weights $\deg(x)=21$, $\deg(y)=14$, $\deg(z)=6$, so that
$x^{2}+y^{3}+z^{7}$ is weighted homogeneous of degree $42$.
Any locally nilpotent derivation $\delta$ of $R$ may be chosen homogeneous with
$\deg(\delta)=d<0$.
Applying $\delta$ to the defining relation yields
\[
2x\delta(x)+3y^{2}\delta(y)+7z^{6}\delta(z)=0,
\]
where all terms have degree $42+d$.
Since the leading forms of $x$, $y^{2}$, and $z^{6}$ are algebraically independent
and $d<0$, it follows that
$\delta(x)=\delta(y)=\delta(z)=0$, hence $\delta=0$. More generally, rings of the form
$\Bbbk[X,Y,Z]/(X^{a}+Y^{b}+Z^{c})$ are rigid whenever $a\ge 2$ and $b,c\ge 3$;
see \cite[Theorem~7.2]{CrachiolaMaubach}.
\end{example}


\section{The LND Approach to Commutative Cancellation}
\label{sec:LND-comm}
While many of these results are classical, our presentation emphasizes a unified
three-step LND paradigm that will later be transported verbatim to the
noncommutative and skew settings.
Locally nilpotent derivations (LNDs) provide a powerful and unifying algebraic
framework for studying the Zariski cancellation problem in the commutative setting.
The central tool is the \emph{Makar--Limanov invariant}, which detects the presence
of nontrivial $\mathbb{G}_a$-actions and captures the extent to which an affine
variety admits additive symmetries.  
This invariant yields a dichotomy between \emph{rigid} algebras, whose geometry is
highly constrained, and \emph{flexible} algebras, whose abundance of additive
symmetries often forces strong structural consequences.  
In this section we present the LND cancellation method for curves and surfaces,
showing how the ML-invariant provides a transparent and conceptual approach to the
classical cancellation results.

\subsection{Dimension one: rigid curves and the affine line}

In Krull dimension one, the Makar--Limanov invariant admits a complete
classification.  The following theorem, due to Crachiola--Makar-Limanov
\cite{CM2} and further developed in \cite{BHHV}, provides both a structural
description of affine curves admitting nontrivial LNDs and a streamlined proof of
the cancellativity of the affine line.

\begin{theorem}(\cite[Lemma 2.3]{CM2}, \cite[Remark 5.4]{BHHV})
\label{thm:curves}
Let $\Bbbk$ be a field of characteristic zero and let $R$ be an affine
commutative domain of Krull dimension~$1$.  
Then exactly one of the following holds:
\begin{enumerate}
    \item $\ML(R) = R$ (i.e., $R$ is LND-rigid), or
    \item $R \cong \Bbbk'[t]$ for some finite field extension $\Bbbk'/\Bbbk$.
\end{enumerate}
In particular, $\Bbbk[t]$ is cancellative.
\end{theorem}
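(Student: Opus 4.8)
The plan is to prove the dichotomy first and then read off cancellativity of $\Bbbk[t]$ as a formal consequence. That (1) and (2) exclude each other is immediate: if $R\cong\Bbbk'[t]$ then $\partial_{t}$ is a nonzero locally nilpotent derivation of $R$, so $\ML(R)\subseteq\ker(\partial_{t})=\Bbbk'\subsetneq R$, ruling out (1). Hence it suffices to show that whenever $\ML(R)\neq R$---equivalently, whenever $R$ admits some nonzero $D\in\LND(R)$---one has $R\cong\Bbbk'[t]$ for a finite field extension $\Bbbk'/\Bbbk$.

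So fix a nonzero $D\in\LND(R)$ and analyze $\ker(D)$. I would invoke the standard transcendence-degree behaviour of nonzero LNDs on a characteristic-zero domain: since $D$ is locally nilpotent and nonzero, choose $a\in R$ with $D(a)\neq 0=D^{2}(a)$; extending $D$ to $\Frac(R)$ one checks that $D(a/D(a))=1$ and that $a/D(a)$ is transcendental over $\Frac(\ker D)$, since a polynomial relation over $\Frac(\ker D)$ annihilated by $D$ would produce a common root of a nonzero polynomial and its derivative. Thus $\operatorname{tr.deg}_{\Bbbk}\Frac(R)\ge\operatorname{tr.deg}_{\Bbbk}\Frac(\ker D)+1$, and as $\dim R=1$ this forces $\operatorname{tr.deg}_{\Bbbk}\Frac(\ker D)=0$. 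Consequently $\ker(D)$ is an integral domain all of whose elements are algebraic over $\Bbbk$, hence a field; and since it sits inside the affine domain $R$, it is a \emph{finite} extension $\Bbbk':=\ker(D)$ of $\Bbbk$ (the algebraic closure of $\Bbbk$ in an affine $\Bbbk$-domain is finite over $\Bbbk$, e.g.\ by Noether normalization).

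Knowing that $\Bbbk'=\ker(D)$ is a field, I would then produce a genuine slice: with $a$ as above, $D(a)$ is a nonzero element of $\Bbbk'$, hence a unit, so $s:=D(a)^{-1}a$ satisfies $D(s)=1$. The Slice Theorem (Theorem~\ref{thm:slice}) then yields $R\cong\ker(D)[s]=\Bbbk'[s]$, completing the dichotomy. I expect the main obstacle to be precisely the identification of $\ker(D)$ as a finite field extension of $\Bbbk$: this is where both the transcendence-degree principle for LNDs and the finiteness of the field of constants of an affine domain are needed, and without it the Slice Theorem cannot be applied.

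For the final assertion, suppose $\Bbbk[t][s]\cong B[s]$ as $\Bbbk$-algebras. First I would verify that $B$ is an affine $\Bbbk$-domain of Krull dimension one: it is a domain as a subring of $B[s]\cong\Bbbk[t,s]$; it is finitely generated because the finitely many $\Bbbk$-algebra generators of $B[s]$ have all their $s$-coefficients in some finitely generated subalgebra $B_{0}\subseteq B$, whence $B_{0}[s]=B[s]$ and therefore $B_{0}=B$ by comparison of $s$-degrees; and $\dim B=\dim B[s]-1=\dim\Bbbk[t,s]-1=1$. Now apply the dichotomy to $B$. If $\ML(B)=B$, then Theorem~\ref{thm:MLstablecomm} gives $\ML(B[s])=B$, while $B[s]\cong\Bbbk[t,s]$ together with $\ML(\Bbbk[t,s])=\Bbbk$ (Example~\ref{ex:affine-plane-concise}) and the invariance of $\ML$ under $\Bbbk$-algebra isomorphisms force $B\cong\Bbbk$, contradicting $\dim B=1$. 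Hence $B\cong\Bbbk'[u]$ for some finite extension $\Bbbk'/\Bbbk$, so $\Bbbk'[u,s]\cong\Bbbk[t,s]$; comparing on both sides the distinguished subfield consisting of $0$ together with all units (which equals $\Bbbk'$ on the left and $\Bbbk$ on the right) yields $\Bbbk'\cong\Bbbk$, and therefore $B\cong\Bbbk[u]\cong\Bbbk[t]$.
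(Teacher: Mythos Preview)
Your proof of the dichotomy is correct and follows exactly the route sketched in the paper: take a nonzero $D\in\LND(R)$, argue that $\ker(D)$ has transcendence degree zero and is therefore a finite field extension $\Bbbk'$ of $\Bbbk$, then produce a slice (since $D(a)\in\Bbbk'^{\times}$) and invoke the Slice Theorem. The paper's proof idea is terse and does not spell out the cancellativity of $\Bbbk[t]$; your deduction of it via the dichotomy, Theorem~\ref{thm:MLstablecomm}, Example~\ref{ex:affine-plane-concise}, and the unit-comparison $\Bbbk'\cong\Bbbk$ is a correct and natural way to extract the ``in particular'' clause.
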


\begin{proof}[Proof Idea]
If $\ML(R)\neq R$, there exists a nonzero $\delta\in\LND(R)$.  
Then $\ker(\delta)$ is a finite-dimensional $\Bbbk$-algebra, hence a finite field
extension $\Bbbk'$.  Choosing $x\in R$ with $\delta(x)=1$ and applying the slice
theorem (Theorem~\ref{thm:slice}) yields $R\cong\Bbbk'[x]$.
\end{proof}

This dichotomy already shows how LND-rigidity aligns perfectly with cancellation:
rigid curves cannot be ``flattened'' by polynomial extension, whereas flexible
curves collapse to polynomial rings.

\subsection{Dimension two: the Makar--Limanov dichotomy}

In dimension two the ML-invariant displays a striking and highly useful
rigidity--flexibility dichotomy.  For unique factorization domains, the invariant
completely determines the algebra.

\begin{theorem}[Makar--Limanov \cite{MakarLimanov1996}]
\label{thm:surface-dichotomy}
Let $\Bbbk$ be an algebraically closed field of characteristic zero and let
$R$ be a two-dimensional affine $\Bbbk$-domain that is a UFD.  
Then exactly one of the following holds:
\begin{enumerate}
    \item $\ML(R)=R$ \textup{(rigid case)}, or
    \item $\ML(R)=\Bbbk$ \textup{(flexible case)}, and in this case 
          $R\cong \Bbbk[x,y]$.
\end{enumerate}
\end{theorem}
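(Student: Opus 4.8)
The plan is to separate the two alternatives according to whether $R$ carries a nonzero locally nilpotent derivation, and then to show that the flexible alternative forces $R$ to be a polynomial ring. If $\LND(R)=\{0\}$ then $\ML(R)=R$, which is case~(1); since $\dim R=2$ we have $R\neq\Bbbk$, so the two cases are mutually exclusive, and it remains to treat the situation in which some nonzero $D\in\LND(R)$ exists. The goal there is to prove $R\cong\Bbbk[x,y]$; once this is known, $\ML(R)=\Bbbk$ follows immediately from the computation in Example~\ref{ex:affine-plane-concise}.

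So fix a nonzero $D\in\LND(R)$ and set $A=\ker(D)$. First I would record the standard structure of $A$: it is factorially closed in $R$ (if $ab\in A\setminus\{0\}$ then $a,b\in A$), hence it is a UFD because $R$ is one, it is algebraically closed in $R$, and — as $R$ is normal — it is integrally closed in $\Frac(A)$, i.e.\ normal. Moreover $\operatorname{tr.deg}_\Bbbk A=\operatorname{tr.deg}_\Bbbk R-1=1$, and since $\dim R\le 2$, Zariski's finiteness theorem applied to the subfield $\Frac(A)=\Frac(A)\cap R$ of $\Frac(R)$ shows that $A$ is an affine $\Bbbk$-algebra. Thus $\Spec(A)$ is a smooth affine curve whose coordinate ring is a UFD, and I would argue that any such curve is $\mathbb{A}^1$: normality presents it as the complement of finitely many points in a smooth projective model $\overline{C}$ of some genus $g$; the vanishing of $\operatorname{Pic}$ forced by the UFD hypothesis excludes $g\ge 1$ (finitely many point classes cannot generate the divisible, non-finitely-generated group $\operatorname{Pic}^0(\overline{C})$) and then excludes removing more than one point, so $A\cong\Bbbk[t]$. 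The one delicate point is that $A$ has no non-constant units; this is where the hypothesis on $R$ enters, via the elementary fact that $R^\times\subseteq\ker(D)$ for every LND.

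With $A=\ker(D)\cong\Bbbk[t]$ in hand, it remains to promote $D$ to a derivation with a slice so that the Slice Theorem (Theorem~\ref{thm:slice}) applies. The plinth ideal $\mathfrak p_D=D(R)\cap A$ is a nonzero ideal of the PID $\Bbbk[t]$, say $\mathfrak p_D=(\pi(t))$; inverting $\pi$ extends $D$ to $R[\pi^{-1}]$ with a slice, so that $R[\pi^{-1}]\cong\Bbbk[t,\pi^{-1}][y]$ and $\Spec(R)\to\mathbb{A}^1$ is an $\mathbb{A}^1$-fibration, trivial over the complement of $V(\pi)$. One must then show that $\pi$ is a constant, so that the fibration is globally trivial and $R\cong A[y]=\Bbbk[t,y]\cong\Bbbk[x,y]$. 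Concretely this says that an $\mathbb{A}^1$-fibration over the affine line with factorial total space and trivial units is a product; I would deduce it either from the Makar--Limanov analysis of the degree filtration attached to $D$, or by invoking the Miyanishi--Sugie/Fujita characterization of $\mathbb{A}^2$ — a smooth affine surface over an algebraically closed field of characteristic zero with $\overline{\kappa}=-\infty$, trivial Picard group and trivial units is $\mathbb{A}^2$ — together with the observation that a nonzero $\mathbb{G}_a$-action forces $\overline{\kappa}(\Spec R)=-\infty$.

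I expect this last step — the passage from a \emph{locally} trivial $\mathbb{A}^1$-fibration to a genuinely polynomial structure — to be the main obstacle, and also the point at which the factoriality of $R$, refined by triviality of units (which is precisely what excludes kernels such as $\Bbbk[t,t^{-1}]$), is indispensable. By contrast, the first two paragraphs are essentially formal once Zariski's finiteness theorem and the basic structure theory of kernels of locally nilpotent derivations are available.
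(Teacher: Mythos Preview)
The paper does not give its own proof of this theorem; it is cited from \cite{MakarLimanov1996} and used as a black box in the proof of Theorem~\ref{thm:A2-cancel}. So there is nothing in the paper against which to compare your outline.

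Your plan is the standard route, and the steps are the right ones in the right order: factorial closedness and algebraic closedness of $A=\ker(D)$, affineness of $A$ via Zariski's finiteness theorem, the identification $A\cong\Bbbk[t]$ from the fact that a smooth affine curve which is a UFD with trivial units is $\mathbb{A}^1$, and finally the passage from an $\mathbb{A}^1$-fibration over $\mathbb{A}^1$ to a global product via either Makar--Limanov's degree-filtration argument or the Miyanishi--Sugie/Fujita characterization of the plane. You also correctly flag the last step as the substantive one and note that the Slice Theorem only gives it after localization.

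There is, however, one genuine gap. You need $R^\times=\Bbbk^\times$ at two places: to force $A^\times=\Bbbk^\times$ (so that $A\cong\Bbbk[t]$ rather than $\Bbbk[t,t^{-1}]$), and again as an input to the Miyanishi--Sugie/Fujita criterion. You write that ``this is where the hypothesis on $R$ enters,'' but none of the stated hypotheses---affine, two-dimensional, domain, UFD---yields trivial units, and without that assumption the dichotomy is simply false: $R=\Bbbk[x,x^{-1}][y]$ is a two-dimensional affine $\Bbbk$-domain and a UFD, yet $\ML(R)=\Bbbk[x,x^{-1}]$, which is neither $R$ nor $\Bbbk$. Your closing remark about ``triviality of units'' shows you sense the issue, but you treat it as though it were supplied by the hypotheses when it is not. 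Either add $R^\times=\Bbbk^\times$ to the statement (this is how the result is usually formulated in the literature) or acknowledge that the theorem as recorded in the paper is missing a hypothesis; what you cannot do is derive trivial units from factoriality alone.
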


This result is a cornerstone of the LND approach to commutative cancellation: it
reduces the classification of two-dimensional UFDs admitting nontrivial LNDs to a
single explicit algebra, namely $\Bbbk[x,y]$.

A fundamental application is the cancellativity of the affine plane.

\begin{theorem}[Cancellation for $\mathbb{A}^2$]
\label{thm:A2-cancel}
Let $\Bbbk$ be algebraically closed of characteristic zero.  
Then $\Bbbk[x,y]$ is cancellative.
\end{theorem}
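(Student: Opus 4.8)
The plan is to deduce this from the Makar--Limanov dichotomy for two-dimensional UFDs (Theorem~\ref{thm:surface-dichotomy}) together with the stability of the ML-invariant in the rigid case (Theorem~\ref{thm:MLstablecomm}). So I would start from an arbitrary commutative $\Bbbk$-algebra $B$ equipped with a $\Bbbk$-algebra isomorphism $\Bbbk[x,y][t]\cong B[t]$, and aim to conclude that $B\cong\Bbbk[x,y]$.

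The first step is to extract from this isomorphism the ring-theoretic properties of $B$ needed to invoke the dichotomy. Since $B$ embeds in $B[t]\cong\Bbbk[x,y,t]$, it is a domain. The $\Bbbk$-algebra map $B[t]\to B$ sending $t\mapsto 0$ is surjective, so $B$ is a quotient of $B[t]\cong\Bbbk[x,y,t]$ and hence a finitely generated (in particular Noetherian) $\Bbbk$-algebra. Comparing Krull dimensions for the Noetherian ring $B$ gives $\Kdim B=\Kdim B[t]-1=\Kdim\Bbbk[x,y,t]-1=2$. Finally, $B[t]\cong\Bbbk[x,y,t]$ is a UFD, and the classical fact that $R[t]$ being a UFD forces $R$ to be a UFD shows that $B$ is a UFD. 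Thus $B$ is a two-dimensional affine $\Bbbk$-domain that is a UFD, which is exactly the setting of Theorem~\ref{thm:surface-dichotomy}.

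The second step is a direct computation of the ML-invariant of the cylinder. Since $\ML$ is preserved by $\Bbbk$-algebra isomorphisms, $\ML(B[t])\cong\ML(\Bbbk[x,y,t])$, and the elementary argument of Example~\ref{ex:affine-plane-concise}, carried out with the three partial derivatives $\partial_x,\partial_y,\partial_t$ in place of $\partial_x,\partial_y$, yields $\ML(\Bbbk[x,y,t])=\Bbbk$. Hence $\ML(B[t])=\Bbbk$. Now I would feed $B$ into Theorem~\ref{thm:surface-dichotomy}: either $B\cong\Bbbk[x,y]$, which is the desired conclusion, or $\ML(B)=B$. In the latter (rigid) case, Theorem~\ref{thm:MLstablecomm} applied to $B$ gives $\ML(B[t])=B$, whence $B=\Bbbk$, contradicting $\Kdim B=2$. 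So the rigid alternative is impossible and $B\cong\Bbbk[x,y]$.

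The part I expect to require the most care, beyond what is already packaged into Theorems~\ref{thm:surface-dichotomy} and \ref{thm:MLstablecomm}, is the bookkeeping that $B$ genuinely satisfies all hypotheses of the dichotomy, in particular the implication that $R[t]$ a UFD forces $R$ a UFD (e.g.\ via divisor class groups) and the finite generation of $B$ as a $\Bbbk$-algebra; these are standard but should be cited precisely. Conceptually, the crux, and the reason the LND method succeeds here, is the elimination of the rigid case: it is precisely ML-stability that converts rigidity of $B$ into a dimension contradiction.
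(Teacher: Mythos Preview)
Your argument is correct and follows the same overall architecture as the paper's proof (verify that $B$ is a two-dimensional affine UFD, then invoke the Makar--Limanov dichotomy of Theorem~\ref{thm:surface-dichotomy}), but there is one genuine and worthwhile difference in how you handle the ML-invariant step. The paper applies the \emph{full} ML-stability statement (Theorem~\ref{thm:ML-stable-comm}, $\ML(A[t])=\ML(A)$) directly to $B$ to conclude $\ML(B)=\Bbbk$, after which the dichotomy immediately forces $B\cong\Bbbk[x,y]$. You instead compute $\ML(B[t])=\ML(\Bbbk[x,y,t])=\Bbbk$ on the cylinder side, feed $B$ into the dichotomy, and eliminate the rigid alternative by appealing only to the \emph{rigid-case} stability (Theorem~\ref{thm:MLstablecomm}: if $\ML(B)=B$ then $\ML(B[t])=B$), reaching a contradiction with $\Kdim B=2$. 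This is a strictly more elementary route: Theorem~\ref{thm:MLstablecomm} is the easier half of ML-stability, and the paper itself presents the general equality of Theorem~\ref{thm:ML-stable-comm} as closely tied to the open Conjecture~\ref{conj:ML-stability}. What you lose is only the one-line directness of the paper's version; what you gain is that your proof rests on a weaker and uncontested input. Your careful verification that $B$ is affine, two-dimensional, and a UFD (the paper takes these for granted) is also a welcome addition; the implication ``$R[t]$ a UFD $\Rightarrow$ $R$ a UFD'' for Noetherian domains is indeed the only point needing an explicit citation.
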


\begin{proof}[Proof Idea]
A classical computation shows $\ML(\Bbbk[x,y])=\Bbbk$ 
\cite{ML, Crachiola2009}.  
If $\Bbbk[x,y][t]\cong B[t]$, then the ML-stability theorem
(Theorem~\ref{thm:ML-stable-comm}) gives $\ML(B)=\Bbbk$.  
Since $B$ is a two-dimensional UFD, 
Theorem~\ref{thm:surface-dichotomy} forces $B\cong\Bbbk[x,y]$.
\end{proof}

\subsection{Stability of the ML-invariant and cancellation}

A key step in the argument above is the behavior of the ML-invariant under
polynomial extensions.

\begin{theorem}[ML-stability, commutative case]
\label{thm:ML-stable-comm}
Let $A$ be an affine domain of characteristic zero.  
Then 
\[
    \ML(A[t])=\ML(A).
\]
\end{theorem}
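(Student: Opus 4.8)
The plan is to prove the two inclusions separately; the inclusion $\ML(A[t])\subseteq\ML(A)$ is routine, while $\ML(A)\subseteq\ML(A[t])$ carries all the content. For the easy direction I would first observe that $\partial_t\in\LND(A[t])$ and, since $\ch\Bbbk=0$, that $\ker\partial_t=A$; hence $\ML(A[t])\subseteq A$. Next, every $E\in\LND(A)$ extends to $\widehat E\in\LND(A[t])$ by decreeing $\widehat E(t)=0$, local nilpotence being immediate from $\widehat E^{\,n}\bigl(\sum a_i t^i\bigr)=\sum E^{n}(a_i)t^i$. Any $b\in\ML(A[t])$ then lies in $A$ and is annihilated by every such $\widehat E$, hence by every $E\in\LND(A)$, so $b\in\ML(A)$.

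For the substantive inclusion I must show that every nonzero $D\in\LND(A[t])$ has $\ker D\supseteq\ML(A)$. The first reduction is a local slice: by local nilpotence there is $r\in A[t]$ with $Dr\neq 0$ and $D^{2}r=0$; set $f:=Dr\in\ker D=:B$ and $s:=r/f$, so that $D(s)=1$ in the localization $A[t]_f$. The Dixmier projection then yields $A[t]_f\cong B_f[s]$ with $D$ acting as $\partial_s$ (this is the slice theorem, Theorem~\ref{thm:slice}, applied over the localized base). A short computation using $f\in\ker D$ and the fact that $A[t]$ is a domain shows $B_f\cap A[t]=B=\ker D$. Consequently it is enough to prove $\ML(A)\subseteq B_f$, i.e.\ that no element of $\ML(A)$ acquires positive $s$-degree inside $B_f[s]$.

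This last step is the main obstacle, and it is precisely where the difficulty of Conjecture~\ref{conj:ML-stability} is concentrated: one must control the embedding $A\hookrightarrow B_f[s]$ and descend information about LNDs of $A[t]$ to $A$ itself. I would attack it from two complementary directions. First, by induction on $\Kdim A$: the cases $\Kdim A\le 2$ are settled by Theorem~\ref{thm:curves} and Theorem~\ref{thm:surface-dichotomy}, since rigid factors are handled by Theorem~\ref{thm:MLstablecomm} and the flexible cases reduce to the elementary computation $\ML(\Bbbk[x,y])=\Bbbk$, with $B=\ker D$ furnishing the lower-dimensional algebra in the inductive step. Second, via homogenization of $D$: filter $A[t]$ by $t$-degree (the associated graded is again a polynomial ring over $A$), pass to the leading-degree part $\gr D$, a $t$-homogeneous LND; when $\gr D$ has $t$-degree $0$ it preserves $A$ and restricts to an LND of $A$ whose kernel governs $\ker D$, which is exactly what is needed. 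The delicate point — the one I expect to be the crux — is the degree bookkeeping: ruling out, after possibly modifying the local slice or conjugating by an exponential automorphism, the case in which $\gr D$ strictly raises the $t$-degree. This is the same phenomenon that must be controlled in the rigid-case argument underlying Theorem~\ref{thm:MLstablecomm} and in its noncommutative counterpart Lemma~\ref{lemma:MLstablenoncomm}.
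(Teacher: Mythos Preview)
The paper does not prove Theorem~\ref{thm:ML-stable-comm}; it is stated without proof or citation, as befits a survey. More to the point, the paper itself presents the general statement as Conjecture~\ref{conj:ML-stability} and lists only the rigid case (Theorem~\ref{thm:MLstablecomm}) as established ``partial progress.'' For the actual applications in Section~\ref{sec:LND-comm}---Theorem~\ref{thm:A2-cancel} and Corollary~\ref{cor:surface-cancel}---only the extreme cases $\ML(A)=\Bbbk$ and $\ML(A)=A$ are invoked, and both of those are easy: when $\ML(A)=\Bbbk$ the inclusion $\Bbbk\subseteq\ML(A[t])\subseteq\ML(A)=\Bbbk$ is trivial, and when $\ML(A)=A$ one cites Theorem~\ref{thm:MLstablecomm}. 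So there is no ``paper's proof'' to compare against, and the full strength of the statement is not actually used.

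Your proposal is honest about being incomplete, and the gap you flag is real. The easy inclusion is fine. For the hard inclusion your two attack lines both stall. The induction on $\Kdim A$ does not get off the ground: the kernel $B=\ker D$ has $\Kdim B=\Kdim A$, not $\Kdim A-1$, so $B$ does not furnish a lower-dimensional object for the inductive step; and Theorem~\ref{thm:surface-dichotomy} requires a UFD hypothesis that neither $A$ nor $B$ need satisfy. The homogenization route is closer to the standard Makar--Limanov toolkit, but as you say, the case where $\gr D$ raises $t$-degree is exactly the obstruction, and nothing in your outline resolves it. In short, you have correctly located the difficulty but not overcome it---which is consistent with the general one-variable identity being, as the paper's own Conjecture~\ref{conj:ML-stability} suggests, not known outside the rigid and fully flexible cases.
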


This leads immediately to the following cancellation criterion for surfaces.

\begin{corollary}
\label{cor:surface-cancel}
Let $A$ be a two-dimensional affine UFD over an algebraically closed 
field of characteristic zero.  
If $\ML(A)=\Bbbk$, then $A$ is cancellative.
\end{corollary}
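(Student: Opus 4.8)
The plan is to deduce Corollary~\ref{cor:surface-cancel} directly from the two preceding results, combining the stability of the Makar--Limanov invariant under polynomial extension (Theorem~\ref{thm:ML-stable-comm}) with the dimension-two dichotomy for UFDs (Theorem~\ref{thm:surface-dichotomy}). First I would suppose $A[t] \cong B[t]$ for some $\Bbbk$-algebra $B$, and observe that $B$ is necessarily a two-dimensional affine domain: it is affine because $B[t]$ is, it is a domain because $B$ embeds in $B[t] \cong A[t]$ which is a domain, and $\dim B = \dim B[t] - 1 = \dim A[t] - 1 = \dim A = 2$. One must also check that $B$ is a UFD; this follows from the classical fact (Nagata's criterion / the result that $R$ is a UFD iff $R[t]$ is) that $B$ is a UFD precisely when $B[t]$ is, and $B[t] \cong A[t]$ is a UFD since $A$ is.

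Next I would apply ML-stability to both sides. On the one hand, $\ML(B[t]) = \ML(B)$ by Theorem~\ref{thm:ML-stable-comm} applied to the affine domain $B$. On the other hand, $\ML(A[t]) = \ML(A) = \Bbbk$ by Theorem~\ref{thm:ML-stable-comm} applied to $A$ together with the hypothesis $\ML(A) = \Bbbk$. Since the Makar--Limanov invariant is an isomorphism invariant (it is defined purely in terms of the $\Bbbk$-algebra structure via locally nilpotent derivations), the isomorphism $A[t] \cong B[t]$ gives $\ML(B[t]) \cong \ML(A[t])$, hence $\ML(B) = \Bbbk$ (identifying $\ML(B)$ with its image, a copy of the base field).

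Finally I would invoke the dichotomy. The algebra $B$ is a two-dimensional affine $\Bbbk$-domain that is a UFD, with $\ML(B) = \Bbbk \neq B$, so it cannot fall into the rigid case of Theorem~\ref{thm:surface-dichotomy}; therefore it lies in the flexible case, which forces $B \cong \Bbbk[x,y] \cong A$. This establishes that $A$ is cancellative.

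The only genuine subtlety — and the step I would flag as the main obstacle — is verifying that the auxiliary hypotheses of Theorem~\ref{thm:surface-dichotomy} transfer from $A[t]$ to $B$, namely that $B$ is again a two-dimensional affine UFD. Affineness and the dimension count are routine, but the UFD property requires the nontrivial input that passing to and from a polynomial ring preserves unique factorization; once that is granted, the argument is a direct chain of implications with no further calculation. (Strictly, $A$ being a UFD is also implicitly needed to make sense of ``$A$ is cancellative'' in the form stated, and this is part of the hypothesis.)
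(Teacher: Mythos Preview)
Your proof is correct and follows essentially the same route as the paper: use ML-stability (Theorem~\ref{thm:ML-stable-comm}) to deduce $\ML(B)=\Bbbk$, then invoke the surface dichotomy (Theorem~\ref{thm:surface-dichotomy}) to force $B\cong\Bbbk[x,y]\cong A$. Your version is in fact more careful than the paper's, since you explicitly verify that $B$ is again a two-dimensional affine UFD (a point the paper leaves implicit), and your identification of this transfer step as the only genuine subtlety is exactly right.
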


\begin{proof}
If $A[t]\cong B[t]$, then ML-stability yields $\ML(B)=\Bbbk$.  
Thus Theorem~\ref{thm:surface-dichotomy} forces $B\cong\Bbbk[x,y]$, and hence 
$A\cong B$ as well.
\end{proof}

\begin{example}
   The Russell–Koras threefolds, introduced in Example~\ref{Russell-Koras}, provided the first counterexamples to the Zariski cancellation problem in dimension three~\cite{ K96,Ru}. They satisfy $X \times \mathbb{A}^1 \cong \mathbb{A}^4$ yet $X \not\cong \mathbb{A}^3$, thereby showing that commutative cancellation can fail in dimension three. This failure is algebraically encoded in the nontrivial Makar-Limanov invariant $\mathrm{ML}(A_n) = \mathbb{C}[x] \supsetneq \mathbb{C}$, which demonstrates that flexibility (characterized by $\mathrm{ML} = \mathbb{C}$) is essential for cancellation theorems to hold.
\end{example}

    \begin{remark}
Although throughout this article we restrict our attention to base fields of characteristic zero, it is worth noting that negative solutions to the Zariski cancellation problem also occur in positive characteristic. In particular, Gupta proved that when $n=3$, the threefold originally constructed by Teruo Asanuma (\cite{As1,As2}) provides a counterexample to Zariski cancellation; see Corollary~3.9 in \cite{Gu1}. Once again, the analysis relies crucially on the use of locally nilpotent derivations (or, equivalently, $\mathbb{G}_a$-actions), highlighting their fundamental role in detecting non-cancellation phenomena across different characteristics.
\end{remark}

\subsection{Summary: the ML cancellation paradigm}

The results above demonstrate a uniform strategy for proving cancellation in the
commutative setting:
\begin{enumerate}
    \item compute or characterize the Makar--Limanov invariant $\ML(A)$;
    \item apply ML-stability to compare $\ML(A)$ with $\ML(A[t])$;
    \item deduce cancellativity from the structural consequences of the value of 
          $\ML(A)$.
\end{enumerate}
This three-step paradigm serves as the conceptual blueprint for the 
noncommutative and skew generalizations developed in the following sections.


\section{Extending the LND Methodology to Noncommutative Algebras}
\label{xxsec3} 

The success of the locally nilpotent derivation approach in commutative algebra naturally raises the question of its applicability beyond the commutative world. This section demonstrates how the core ideas of the LND methodology—centered on the Makar-Limanov invariant—were systematically adapted to resolve cancellation problems for noncommutative algebras. We trace the development from the foundational criterion linking rigidity to cancellativity, through its application to algebras of low dimension, to recent generalizations and remaining open challenges.

\subsection{The Noncommutative Cancellation Criterion}

The cornerstone of the noncommutative theory is the direct analogue of the principle established in the commutative setting: \emph{rigidity, as detected by the ML-invariant, implies cancellativity}. The following theorem makes this precise and provides the blueprint for all subsequent applications.

\begin{theorem}(\cite[Theorem 3.3]{BellZhang2017})\label{thm:rigidity-implies-cancellation}
Let $A$ be a finitely generated domain of finite Gelfand-Kirillov (GK) dimension over a field $\Bbbk$ of characteristic zero. If $\ML(A[t]) = A$, then $A$ is cancellative.
\end{theorem}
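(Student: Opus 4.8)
The plan is to use the two natural locally nilpotent derivations attached to a putative isomorphism $A[t]\cong B[s]$---the coordinate derivations $\partial_t$ and $\partial_s$---together with the ML-rigidity hypothesis, to identify $B$ with $A$. So assume $A[t]\cong B[s]$ and fix a $\Bbbk$-algebra isomorphism $\psi\colon A[t]\to B[s]$; the goal is $A\cong B$. Write $A'=\psi(A)$ and $u=\psi(t)$. First I would note that, since $t$ is central in $A[t]$ and $\psi$ is a ring isomorphism, $u$ is central in $B[s]$ and the direct sum decomposition $A[t]=\bigoplus_{i\ge0}At^i$ is carried to $B[s]=\bigoplus_{i\ge0}A'u^i$; thus $B[s]=A'[u]$ is genuinely a polynomial ring over the subalgebra $A'\cong A$. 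In particular $A'$ is again a domain, and $A'[u]$ carries its standard $A'$-linear derivation $\partial_u$, with $\ker\partial_u=A'$.

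Next I would bring in the formal coordinate derivations: $\partial_t$ on $A[t]$, with $\ker\partial_t=A$, and $\partial_s$ on $B[s]$, with $\ker\partial_s=B$; both are locally nilpotent. Since the Makar--Limanov invariant is an isomorphism invariant, $\psi(\ML(A[t]))=\ML(B[s])$, so the hypothesis $\ML(A[t])=A$ translates into $\ML(B[s])=A'$. Because $\partial_s$ is one of the derivations over which the intersection defining $\ML(B[s])$ is taken, this yields
\[
A'=\ML(B[s])\subseteq\ker\partial_s=B,
\]
that is, after identifying $A$ with $A'$, we obtain the inclusion $A\subseteq B$. Everything now reduces to upgrading this to the equality $A=B$.

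For that I would examine $\partial_s$ through the presentation $B[s]=A'[u]$. Since $A'\subseteq B=\ker\partial_s$, the derivation $\partial_s$ vanishes on $A'$ and so is $A'$-linear; a short computation using centrality of $u$ shows that $g:=\partial_s(u)$ centralizes $A'$, so $g\in Z(A')[u]$, and then $\partial_s$ and $g\,\partial_u$ are two $A'$-linear derivations of $A'[u]$ agreeing on the generator $u$, hence $\partial_s=g\,\partial_u$. The decisive observation is that $\partial_s$ has the slice $s$, since $\partial_s(s)=1$; thus $g\cdot\partial_u(s)=1$, so $g$ is a (central) unit of $A'[u]$. As $A'$ is a domain, a leading-coefficient argument shows that every unit of $A'[u]$ lies in $A'$; hence $g\in A'$ is a unit, and therefore $g\,\partial_u(f)=0$ if and only if $\partial_u(f)=0$. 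It follows that $\ker\partial_s=\ker(g\,\partial_u)=\ker\partial_u=A'$, and so $B=\ker\partial_s=A'=\psi(A)\cong A$. (If one prefers to avoid the slice, the same conclusion follows by noting that $\deg_u g\ge1$ would force the iterates of $\partial_s$ applied to $u$ to grow in $u$-degree---their leading coefficients persisting because $A'$ is a domain and $\ch\Bbbk=0$---contradicting local nilpotence.)

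I expect the main obstacle to be precisely this last step: promoting $A\subseteq B$ to $A=B$. The inclusion itself is essentially free, resting only on a single LND, the ML-hypothesis, and the fact that $\ML$ is an isomorphism invariant. What does the real work is the noncommutative slice phenomenon---that an $A'$-linear locally nilpotent derivation of $A'[u]$ admitting a slice has kernel exactly $A'$---and this is where one genuinely needs $A$ to be a domain and $\ch\Bbbk=0$, just as the commutative Slice Theorem (Theorem~\ref{thm:slice}) underpins the classical arguments. Finiteness of the Gelfand--Kirillov dimension plays only a supporting role here: it is part of the ambient framework and is what makes the companion ML-stability statement (Lemma~\ref{lemma:MLstablenoncomm}) applicable, but the cancellation deduction sketched above already goes through for any finitely generated domain $A$ with $\ML(A[t])=A$.
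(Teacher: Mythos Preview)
Your argument is correct, and the overall strategy matches the paper's: transport the Makar--Limanov invariant along the isomorphism to obtain $\psi(A)\subseteq B$, then upgrade this inclusion to an equality. The difference lies in how the upgrade is carried out. The paper's proof, after establishing $\ML(B[t])\cong\ML(A[t])=A$, appeals to \cite[Lemma~3.2]{BellZhang2017}---a separate structural lemma about graded isomorphisms of polynomial extensions---to conclude that $\varphi$ restricts to an isomorphism of degree-zero pieces. You instead give a direct and self-contained computation: writing $B[s]=A'[u]$, you show that $\partial_s$ is $A'$-linear, hence equal to $g\,\partial_u$ for some central $g$, and then the slice $\partial_s(s)=1$ forces $g$ to be a unit of $A'$, whence $\ker\partial_s=\ker\partial_u=A'$. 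This is essentially an inline proof of the noncommutative slice theorem (Theorem~\ref{thm:noncomm-slice}) specialized to the situation at hand, and it has the advantage of making transparent exactly where the domain hypothesis and $\ch\Bbbk=0$ enter (units of $A'[u]$ lie in $A'$; the derivation $\partial_u$ is well-behaved). Your closing remark is also accurate: the finite GK-dimension hypothesis is not used in the cancellation deduction itself---the paper invokes it only to record $\GKdim B=\GKdim A$ via \cite[Lemma~2.1(2)]{BellZhang2017}, a fact that plays no further role in the argument.
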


\begin{proof}
Let $\varphi : A[t] \to B[t]$ be an isomorphism for some $\Bbbk$-algebra $B$.  
Then $\GKdim B = \GKdim A < \infty$ by \cite[Lemma 2.1(2)]{BellZhang2017}.

Let $\partial_t := \frac{\partial}{\partial t}$ be the derivation on $B[t]$; it is locally nilpotent, and the intersection of its kernels is exactly $B$.  
In particular,
\[
    \ML(B[t]) \subseteq B.
\]

On the other hand, by hypothesis,
\[
    \ML(A[t]) = A.
\]

Now let $\delta$ be a locally nilpotent derivation of $B[t]$.  
Then the derivation
\[
    \varphi^{-1} \circ \delta \circ \varphi
\]
is a locally nilpotent derivation of $A[t]$.  
Similarly, if $\delta'$ is a locally nilpotent derivation of $A[t]$, then
\[
    \varphi \circ \delta' \circ \varphi^{-1}
\]
is a locally nilpotent derivation of $B[t]$.

This shows that the locally nilpotent derivations of $A[t]$ and of $B[t]$ correspond under the isomorphism $\varphi$.  
Consequently, $\varphi$ induces an algebra isomorphism
\[
    \ML(A[t]) \;\cong\; \ML(B[t]).
\]
In particular, $\varphi$ maps $A$ onto $B$.
Consider the polynomial extension $Y = A[t]$ where $t$ has degree $1$, so that 
$Y_0 = A$ is the degree-zero homogeneous component. By 
\cite[Lemma 3.2]{BellZhang2017}, any graded isomorphism $\varphi: Y \to Y'$ 
between polynomial extensions maps degree-zero components isomorphically. 
Since $\ML(A[t]) = A$ and $\varphi$ preserves the ML-invariant (as shown above), 
we have $\ML(B[t]) = B$. Therefore $\varphi$ induces an isomorphism 
$A \cong B$, as desired.
\end{proof}

\begin{remark}
    The key technical point is that the isomorphism $\varphi$ must respect the polynomial structure, which forces it to map homogeneous components to homogeneous components. This rigidity is what allows us to recover an isomorphism at the level of degree-zero parts.
\end{remark}

This theorem shifts the focus to understanding when the condition $\ML(A[t]) = A$ holds. The key stability result, Lemma \ref{lemma:MLstablenoncomm} from Section 1, provides the answer: for an Ore domain $A$, if $\ML(A)=A$ (i.e., $A$ is LND-rigid), then $\ML(A[t])=A$. This yields the most widely applicable corollary:

\begin{corollary}(\cite[Theorem 3.6]{BellZhang2017})\label{cor:rigidity-criterion-nc}
Let $A$ be a finitely generated Ore domain of finite GK-dimension over a field of characteristic zero. If $A$ is LND-rigid ($\ML(A)=A$), then $A$ is cancellative.
\end{corollary}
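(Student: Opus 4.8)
The plan is to deduce Corollary~\ref{cor:rigidity-criterion-nc} from the two results immediately preceding it: the cancellation criterion of Theorem~\ref{thm:rigidity-implies-cancellation} (rigidity of $A[t]$ implies cancellativity) and the noncommutative ML-stability result of Lemma~\ref{lemma:MLstablenoncomm} (for a finitely generated Ore domain $A$, if $\ML(A)=A$ then $\ML(A[x])=A$). The strategy is therefore a two-line syllogism: start from the hypothesis $\ML(A)=A$, feed it into Lemma~\ref{lemma:MLstablenoncomm} to upgrade it to $\ML(A[t])=A$, then invoke Theorem~\ref{thm:rigidity-implies-cancellation} to conclude that $A$ is cancellative.

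The steps, in order, would be as follows. First I would note that all the hypotheses needed are in place: $A$ is a finitely generated Ore domain of finite GK-dimension over $\Bbbk$ with $\ch(\Bbbk)=0$, and $\ML(A)=A$. Second, I would apply Lemma~\ref{lemma:MLstablenoncomm} to $A$, which requires precisely ``finitely generated Ore domain'' and ``$\ML(A)=A$,'' to obtain
\[
    \ML(A[t]) = A .
\]
Third, I would observe that $A$ also satisfies the hypotheses of Theorem~\ref{thm:rigidity-implies-cancellation}---namely, it is a finitely generated domain of finite GK-dimension over a characteristic-zero field (an Ore domain is in particular a domain, and finite GK-dimension is assumed)---and that we have just verified its key hypothesis $\ML(A[t])=A$. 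Applying that theorem yields that $A$ is cancellative, which is the claim.

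There is essentially no obstacle here: the work has already been done in Theorem~\ref{thm:rigidity-implies-cancellation} and Lemma~\ref{lemma:MLstablenoncomm}, and the corollary is a formal consequence. If anything requires a word of care, it is only the bookkeeping of hypotheses---checking that ``finitely generated Ore domain of finite GK-dimension'' is strong enough to satisfy the ambient assumptions of \emph{both} cited results simultaneously (it is, since each result's hypotheses are implied). One might add a remark that the Ore condition is used only to access the stability lemma, while the GK-dimension finiteness is what the cancellation criterion needs; in settings where $\ML(A[t])=A$ can be established by other means, the Ore hypothesis can be dropped.
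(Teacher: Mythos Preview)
Your proposal is correct and matches the paper's own reasoning: the paper explicitly frames this corollary as the three-step pattern ``(1) establish rigidity ($\ML(A)=A$), (2) use stability ($\ML(A[t])=A$), (3) apply Theorem~\ref{thm:rigidity-implies-cancellation},'' which is exactly your argument via Lemma~\ref{lemma:MLstablenoncomm} followed by Theorem~\ref{thm:rigidity-implies-cancellation}. Your bookkeeping remark on which hypothesis feeds which result is accurate and slightly more explicit than the paper's informal discussion.
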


This corollary establishes the noncommutative version of the three-step pattern: (1) establish rigidity ($\ML(A)=A$), (2) use stability ($\ML(A[t])=A$), (3) apply Theorem \ref{thm:rigidity-implies-cancellation} to conclude cancellativity.

\begin{example}\cite[Theorem~0.8]{BellZhang2017}
\label{ex:skew-lnd-rigid}
Let $A$ be a finite tensor product of skew polynomial rings
\[
k_{q}[x_1,\dots,x_n],
\]
where $n$ is even and $q \in\Bbbk\setminus  \{0,1\}$ is a root of unity.
Then $A$ is LND-rigid. As a consequence, $A$ is cancellative.
\end{example}

\subsection{Applications: Cancellation for Algebras of Low Dimension}

The power of the rigidity criterion is best illustrated by its application to solve the noncommutative Zariski cancellation problem for algebras of Gelfand-Kirillov dimension one and two.

For algebras of dimension one, the noncommutative slice theorem (Theorem \ref{thm:noncomm-slice}) plays a role analogous to its commutative counterpart (Theorem \ref{thm:slice}), leading to a complete characterization.

\begin{theorem}[Noncommutative Slice Theorem] (\cite[Lemma 3.1]{BHHV})\label{thm:noncomm-slice}
Let $\Bbbk$ be a field of characteristic zero and let $A$ be a $\Bbbk$-algebra. If $\delta \in \LND(A)$ and there exists an element $x \in Z(A)$ (the center of $A$) such that $\delta(x)=1$, then $A \cong \ker(\delta)[x]$.
\end{theorem}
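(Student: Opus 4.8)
The plan is to imitate the commutative Slice Theorem (Theorem~\ref{thm:slice}) while tracking carefully where commutativity is used, replacing those uses by the hypothesis $x \in Z(A)$. First I would set $K = \ker(\delta)$ and observe that, since $x$ is central, $K$ is a subalgebra of $A$ containing $\Bbbk$, and the powers $x^n$ are all central with $\delta(x^n) = n x^{n-1}$, so the "Taylor expansion" machinery goes through unchanged: for any $a \in A$ there is a well-defined element
\[
\pi(a) = \sum_{n \ge 0} \frac{(-1)^n}{n!}\, \delta^n(a)\, x^n,
\]
a finite sum because $\delta$ is locally nilpotent. The centrality of $x$ is exactly what makes $\pi$ behave well: one checks $\pi$ is $\Bbbk$-linear, $\pi(a) \in K$ (apply $\delta$ term by term and telescope, using $\delta(x)=1$ and that $x$ commutes with everything so the Leibniz rule does not produce cross terms), and $\pi$ is a $K$-bimodule map which is the identity on $K$.

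Next I would show $\pi$ is an algebra homomorphism $A \to K$, i.e. $\pi(ab) = \pi(a)\pi(b)$. This is the step that in the commutative proof is a clean binomial identity; here one needs that $\delta^n(ab) = \sum_k \binom{n}{k}\delta^k(a)\delta^{n-k}(b)$ (valid for any derivation), and then reindex the double sum using that $x$ is central to collect the $x$-powers — the powers of $x$ can be freely moved past the $\delta^j(a)$ and $\delta^j(b)$ precisely because $x \in Z(A)$. With $\pi$ a retraction of the inclusion $K \hookrightarrow A$ onto the kernel, I would then define the comparison map $\Phi : K[x] \to A$ sending $x \mapsto x$ and acting as the inclusion on $K$; this is well-defined as an algebra map since $x$ is central in $A$ and central in $K[x]$, and $K$ commutes with $x$ in both. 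Surjectivity of $\Phi$ follows from the identity $a = \sum_{n} \frac{1}{n!}\,\delta^n(\pi\text{-corrected terms})\,x^n$ — more cleanly, one verifies $a - \sum_{j} c_j x^j \in \bigcap_m \delta^m(A)\text{-}$type reasoning, or simply notes that $\{a \in A : a \in \Phi(K[x])\}$ is a $\delta$-stable subalgebra containing $K$ and $x$ hence all of $A$ by a standard induction on $\delta$-degree.

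For injectivity of $\Phi$, suppose $\sum_{j=0}^{m} c_j x^j = 0$ in $A$ with $c_j \in K$ and $c_m \neq 0$; applying $\delta$ repeatedly (each application lowers the $x$-degree by one, since $\delta(c_j x^j) = c_j\, j\, x^{j-1}$ as $c_j \in \ker\delta$ and $x$ is central) yields $m!\, c_m = 0$, a contradiction in characteristic zero. Thus $\Phi$ is an isomorphism and $A \cong \ker(\delta)[x]$.

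The main obstacle I anticipate is not any single hard estimate but rather the bookkeeping in verifying that $\pi$ is multiplicative and that $\Phi$ is a well-defined algebra homomorphism: one must be scrupulous about \emph{which} commutations are invoked and confirm that each is justified solely by $x \in Z(A)$, never by commuting two general elements of $A$. In particular the formula for $\pi(a)$ must have the $x$-powers placed consistently (say, on the right), and showing $\pi(ab)=\pi(a)\pi(b)$ requires moving an $x$-power from between $\delta^k(a)$ and $\delta^{n-k}(b)$ out to the right — legitimate exactly because $x$ is central. Once the centrality of $x$ is used systematically in place of the ambient commutativity, the commutative proof transfers essentially verbatim.
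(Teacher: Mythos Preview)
The paper itself does not prove this theorem; it merely states it with a citation to \cite[Lemma~3.1]{BHHV}. Your argument is correct and is exactly the standard proof one finds in that reference: the Dixmier-type projection $\pi(a)=\sum_{n\ge 0}\frac{(-1)^n}{n!}\delta^n(a)x^n$ (well defined by local nilpotence), verification that $\pi$ lands in $K=\ker\delta$ and is multiplicative, and then the comparison map $\Phi:K[x]\to A$ whose injectivity follows from applying $\delta^m$ to a relation $\sum_{j=0}^m c_j x^j=0$. At each commutation step you correctly invoke only $x\in Z(A)$, never commutativity of general elements, which is precisely the point of the noncommutative generalization.

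One small wrinkle worth tightening in your surjectivity sketch: saying that $\mathrm{Im}(\Phi)$ is ``$\delta$-stable'' is true but is the wrong direction for the induction. What you actually use is that if $\delta(a)\in\mathrm{Im}(\Phi)$ then $a\in\mathrm{Im}(\Phi)$: writing $\delta(a)=\sum_j c_j x^j$ with $c_j\in K$, the element $a-\sum_j \tfrac{c_j}{j+1}x^{j+1}$ lies in $K$ (its $\delta$-image vanishes), so $a\in K[x]$. This is the induction on $\delta$-degree you allude to, just stated the right way around.
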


This theorem is instrumental in analyzing algebras admitting a nontrivial LND. Its application yields a direct generalization of the Abhyankar-Eakin-Heinzer theorem to the noncommutative setting, resolving cancellation for all domains of GK-dimension one in characteristic zero.

\begin{theorem} (\cite[Theorem 1.1]{BHHV})\label{thm:nc-curves-cancel}
Let $\Bbbk$ be a field of characteristic zero and let $A$ be an affine domain over $\Bbbk$ of GK-dimension one. Then $A$ is cancellative.
\end{theorem}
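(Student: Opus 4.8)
The plan is to follow the three-step LND paradigm adapted to GK-dimension one, using the noncommutative slice theorem (Theorem~\ref{thm:noncomm-slice}) as the central instrument. First I would reduce to the dichotomy ``$A$ is rigid'' versus ``$A$ admits a nontrivial LND.'' If $\ML(A) = A$, then since an affine domain of GK-dimension one is automatically a (left and right) Ore domain of finite GK-dimension, Corollary~\ref{cor:rigidity-criterion-nc} applies directly and $A$ is cancellative. So the substance of the argument is the non-rigid case: suppose there exists a nonzero $\delta \in \LND(A)$.

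\textbf{Analyzing the non-rigid case.} Here I would argue that $\ker(\delta)$ is an affine domain of GK-dimension zero, hence (being a domain that is finite-dimensional over $\Bbbk$, or at least integral of GK-dimension $0$) a division ring, in fact a finite field extension $\Bbbk'$ of $\Bbbk$ by the noncommutative analogue of the ``GK-dimension one minus one = zero'' bookkeeping; one must check that the degree function attached to $\delta$ forces $\GKdim \ker(\delta) = \GKdim A - 1 = 0$. Next, one wants a slice: an element $x$ with $\delta(x) = 1$. The delicate point is that Theorem~\ref{thm:noncomm-slice} requires the slice to be \emph{central}. One produces a local slice (some $a$ with $\delta(a) \neq 0$ and $\delta^2(a) = 0$) and then argues, using that $\ker(\delta) = \Bbbk'$ is central (being the image of $\Bbbk'$, which lies in the center up to the usual care) and a degree argument, that after inverting the nonzero element $\delta(a) \in \Bbbk'$ one obtains a genuine central slice $x$. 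Then Theorem~\ref{thm:noncomm-slice} gives $A \cong \Bbbk'[x]$, a commutative polynomial ring over a field, which is cancellative by the classical Abhyankar--Eakin--Heinzer theorem (the $n=1$ case cited in the introduction).

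\textbf{Assembling cancellativity.} With the structure theorem in hand, suppose $A[t] \cong B[t]$. In the rigid case cancellativity is immediate from Corollary~\ref{cor:rigidity-criterion-nc}. In the non-rigid case $A \cong \Bbbk'[x]$; one checks that the property ``$\cong \Bbbk'[x]$ for some finite extension $\Bbbk'/\Bbbk$'' is itself preserved under cancellation --- this follows because $\GKdim B = \GKdim A = 1$ and $B$ is again an affine domain, so $B$ falls under the same dichotomy, while a comparison of the respective ML-invariants (or of the fields of fractions, since $\Frac(\Bbbk'[x][t]) = \Bbbk'(x,t)$ pins down $\Bbbk'$ as the algebraic closure of $\Bbbk$ inside it) forces $B \cong \Bbbk'[x] \cong A$.

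\textbf{Main obstacle.} The step I expect to be hardest is establishing the \emph{central} slice: producing $x \in Z(A)$ with $\delta(x) = 1$ rather than merely a local slice. In the commutative case the Slice Theorem follows painlessly once a local slice exists, but noncommutatively one must genuinely exploit that $\ker(\delta)$ is a central division ring of the right size, and control how $\delta$ interacts with the center; getting the GK-dimension bookkeeping $\GKdim \ker(\delta) = 0$ airtight (so that $\ker(\delta)$ is forced to be a finite field extension and in particular central) is the technical crux on which the whole reduction rests.
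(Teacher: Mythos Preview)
Your overall architecture matches the paper's: the same rigid/non-rigid dichotomy, Corollary~\ref{cor:rigidity-criterion-nc} in the rigid case, and the noncommutative slice theorem (Theorem~\ref{thm:noncomm-slice}) in the non-rigid case to exhibit $A$ as a polynomial ring over $\ker(\delta)$. So the strategy is correct.

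There is, however, a genuine gap in your treatment of $\ker(\delta)$. You write that a domain of GK-dimension zero over $\Bbbk$ is ``a division ring, in fact a finite field extension $\Bbbk'$ of $\Bbbk$,'' and later that the GK-dimension bookkeeping forces $\ker(\delta)$ to be ``a finite field extension and in particular central.'' The first half is right: $\GKdim\ker(\delta)=0$ makes $\ker(\delta)$ a finite-dimensional $\Bbbk$-domain, hence a division algebra $D$. But nothing forces $D$ to be commutative. Over $\Bbbk=\mathbb{R}$, for instance, $A=\mathbb{H}[x]$ (with $x$ central) is an affine domain of GK-dimension one, $\partial_x$ is an LND, and $\ker(\partial_x)=\mathbb{H}$ is not a field. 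The paper's sketch is careful to say ``finite-dimensional division algebra $D$,'' not ``field extension.''

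This affects two of your downstream steps. First, your argument for producing a \emph{central} slice leans on $\ker(\delta)=\Bbbk'$ being central; with $D$ possibly noncommutative you need a different reason that some $x\in Z(A)$ satisfies $\delta(x)=1$ (in the BHHV argument this comes from a more careful analysis, not from centrality of the kernel). Second, your endgame invokes the commutative Abhyankar--Eakin--Heinzer theorem for $\Bbbk'[x]$; once $D$ may be noncommutative you instead need cancellativity of $D[x]$ for $D$ a finite-dimensional division algebra, which is what the paper actually asserts (``a polynomial ring over $D$, which is easily seen to be cancellative''). Both repairs are routine, but as written your proposal relies on a commutativity claim that is false in general.
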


\begin{proof}[Proof Idea]
If $\ML(A)=A$, then $A$ is rigid and cancellative by Corollary \ref{cor:rigidity-criterion-nc}. If $\ML(A) \neq A$, there exists a nonzero LND $\delta$. For a domain of GK-dimension one, the kernel of $\delta$ is a finite-dimensional division algebra $D$. A careful analysis, utilizing Theorem \ref{thm:noncomm-slice}, shows that $A$ is isomorphic to a polynomial ring over $D$, which is easily seen to be cancellative.
\end{proof}

For algebras of dimension two, the rigidity criterion applies almost universally. The following landmark result, contrasting sharply with the open commutative case for surfaces, showcases the strength of the noncommutative LND method.

\begin{theorem} (\cite[Theorem 0.5]{BellZhang2017})\label{thm:nc-surfaces-cancel}
Let $\Bbbk$ be an algebraically closed field of characteristic zero. Let $A$ be a finitely generated $\Bbbk$-domain of GK-dimension two. If $A$ is not commutative, then $A$ is cancellative.
\end{theorem}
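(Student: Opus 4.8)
The plan is to establish LND-rigidity of every noncommutative finitely generated $\Bbbk$-domain $A$ of GK-dimension two, and then invoke Corollary~\ref{cor:rigidity-criterion-nc} to conclude cancellativity. The first reduction is to analyze the structure forced on $A$ by the existence of a hypothetical nonzero $\delta \in \LND(A)$: its kernel $A_0 = \ker(\delta)$ is a subalgebra, and by standard GK-dimension estimates for domains with a locally nilpotent derivation (the noncommutative analogue of the commutative fact that $\dim \ker(\delta) = \dim A - 1$), $\GKdim A_0 = 1$. I would first show that $A_0$ is an affine Ore domain of GK-dimension one, so by the classification available in dimension one (essentially Theorem~\ref{thm:nc-curves-cancel}'s underlying structure, or directly the fact that finitely generated domains of GK-dimension one over an algebraically closed field are commutative and in fact either a field or contained in a rational function field), $A_0$ is commutative — indeed $A_0 \cong \Bbbk[z]$ or a localization thereof.

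The second step is to use the filtration by powers of $\delta$: set $A_i = \ker(\delta^{i+1})$, giving an exhaustive filtration $A_0 \subseteq A_1 \subseteq \cdots$ with $\bigcup A_i = A$, and $\delta$ shifts the filtration down by one. Pick $y \in A$ with $\delta(y)$ a nonzero element of $A_0$; after localizing $A_0$ at the multiplicative set generated by $\delta(y)$ one can arrange a ``local slice,'' and the associated graded object $\gr A$ with respect to the $\delta$-filtration becomes a polynomial-type extension of (the localization of) $A_0$. The key structural claim is that $\gr A$ — and hence, by a degeneration/lifting argument, $A$ itself after inverting one element of the center — is forced to be commutative: the commutator $[a,b]$ for $a,b \in A$ lies in a lower filtration degree than the product, and an induction on filtration degree combined with the fact that the degree-zero part $A_0$ is central (it commutes with everything because, roughly, $\delta$-rigid elements in dimension one cannot support noncommutativity in a dimension-two domain) collapses all commutators to zero. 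This contradicts the hypothesis that $A$ is noncommutative.

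The main obstacle, and where I expect the real work to lie, is precisely this last step: showing that the $\delta$-filtration forces commutativity. The subtlety is that $A_0$ need not be central in $A$ a priori — one only knows $\delta$ vanishes on it — so one must rule out twisted behavior where $y$ acts on $A_0 \cong \Bbbk[z]$ by a nontrivial automorphism or skew derivation. Here one uses GK-dimension two crucially: if $y z y^{-1} = \alpha z + \beta$ nontrivially in the Ore localization, then $A$ would contain a copy of a more complicated skew extension whose GK-dimension or whose LND structure is incompatible with the hypotheses (this is where one would cite or reprove the GK-dimension-two classification results of \cite{ArtinStafford}-type, or argue via the fact established in \cite{BellZhang2017} that the only noncommutative candidates are quantum-plane-like algebras $k_q[x,y]$ with $q$ not a root of unity, which are then shown directly to be LND-rigid by a weight/degree argument as in Example~\ref{ex:rigid-surface-concise}). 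In other words, the honest route is: (1) reduce via $\LND$ to understanding what a dimension-two domain with a $\mathbb{G}_a$-action looks like, (2) show any such algebra is commutative, (3) conclude that noncommutative forces $\LND(A) = \{0\}$, i.e.\ $\ML(A) = A$, and (4) apply the rigidity criterion. Step (2) is the crux and likely requires either the hard classification of GK-dimension-two domains or a clever direct filtration argument avoiding it.
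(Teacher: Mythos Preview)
Your proposal follows essentially the same strategy as the paper's proof idea: assume a nonzero LND exists, analyze the kernel and the associated $\delta$-filtration/graded structure to force commutativity of $A$, derive a contradiction, and then invoke the rigidity criterion (Corollary~\ref{cor:rigidity-criterion-nc}). The paper's sketch is briefer than yours---it simply says one ``analyz[es] the graded algebra associated to a filtration of $A$'' and that the technical heart of this step ``varies depending on the specific class of algebras (e.g., Poisson brackets, deformation arguments)''---but the overall architecture, including your honest identification of step~(2) as the crux, matches.
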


The argument splits naturally into two cases, according to whether the algebra $A$ satisfies a polynomial identity.

\begin{enumerate}
    \item If $A$ is not PI and GK-dimension two, then its center is $\Bbbk$. So, $A$ it is cancellative (\cite[Propostion 1.3]{BellZhang2017}).

    \item 
    When $A$ is PI, then $A$ is $\LND$-rigid. In this situation, cancellativity follows by applying Corollary~\ref{cor:rigidity-criterion-nc}.
\end{enumerate}

This theorem highlights a striking dichotomy between the commutative and
noncommutative settings. While the commutative polynomial ring
$\Bbbk[x,y]$ is flexible, in the sense that $\ML(\Bbbk[x,y])=\Bbbk$,
its noncommutative analogues are typically rigid. In many cases, this
rigidity is precisely the mechanism that enforces cancellativity.

Nevertheless, rigidity should be understood as a sufficient, but not a
necessary, condition for cancellation. It is well known that rigid
algebras are cancellative; however, Example~\ref{ex:lie} shows that the
converse fails in general. In that example, the algebra is not rigid,
yet it has trivial center, which is enough to guarantee cancellativity.
This phenomenon is consistent with the behavior of noncommutative
surfaces of Artin--Schelter regular dimension two. In particular, both
the Jordan plane and the quantum plane are cancellative algebras,
providing concrete noncommutative examples where cancellation holds
independently of rigidity.

\begin{example}
Following Example~\ref{ex:danielewski-concise}, if $i \neq j$, then the
algebras $R_i$ and $R_j$ are not isomorphic. Nevertheless, their
polynomial extensions are isomorphic:
\[
R_i[t] \;\cong\; R_j[t] \qquad \text{for all } i,j \geq 1.
\]
Consequently, each algebra $R_i$ provides a counterexample to the
Zariski cancellation property \cite{Da, Crachiola2009}. This illustrates
that an intermediate Makar--Limanov invariant,
$\ML(R_i)=\Bbbk[x]$—neither trivial nor rigid—signals the failure of
cancellation.
\end{example}

The extension of locally nilpotent derivation techniques to the
noncommutative setting has proved remarkably effective, yielding a
coherent framework that completely resolves the cancellation problem
in GK-dimensions one and two. The guiding principle—that rigidity, as
detected by the Makar--Limanov invariant, implies cancellativity—has
emerged as both robust and conceptually transparent.

In higher-dimensional Artin--Schelter regular algebras the situation
becomes more subtle, and methods beyond LNDs are often required. We
refer the reader to Section~\ref{xxsec5} for a discussion of open
problems, recent developments, and promising directions for future
research.


\section{The LND Methodology for Skew Polynomial Extensions}
\label{xxsec4} 

The final generalization of the cancellation problem takes us into the realm of skew polynomial rings, or Ore extensions. Here, the polynomial variable no longer commutes with coefficients but instead obeys a rule twisted by an automorphism $\sigma$ and a $\sigma$-derivation $\delta$: $x \cdot r = \sigma(r)x + \delta(r)$ for $r \in R$. The \textbf{skew cancellation problem} asks: if $R[x;\sigma,\delta] \cong S[y;\tau,\partial]$, does it follow that $R \cong S$?

This problem subsumes both the classical case ($\sigma=\operatorname{id}, \delta=0$) and the derivation case ($\sigma=\operatorname{id}$). Its study \cite{AKP, Ber, BHHV, TZZ} tests the limits of the LND methodology: can techniques based on $\mathbb{G}_a$-actions and rigidity be adapted to settings where the polynomial extension itself carries additional algebraic structure? This section shows that the answer is affirmative, completing our narrative of how a single powerful idea permeates increasingly general cancellation problems.

\subsection{Adapting the Framework: LNDs in the Skew Setting}

The definition of a locally nilpotent derivation remains unchanged for an Ore extension $A = R[x;\sigma,\delta]$. However, its interaction with the skew structure imposes new constraints. A derivation $D \in \Der_\Bbbk(A)$ must satisfy the Leibniz rule with respect to the non-standard multiplication, which places conditions on how $D$ behaves with respect to the variable $x$.

A crucial first step is to understand when rigidity of the coefficient ring forces rigidity of the skew extension. The following lemma extends the key stability result (Lemma \ref{lemma:MLstablenoncomm}) to the derivation-type Ore extension.

\begin{lemma}(\cite[Lemma 5.3]{BHHV})\label{lem:ML-stable-skew-der}
Let $\Bbbk$ be a field of characteristic zero and let $A$ be a finitely generated Ore domain over $\Bbbk$. If $\ML(A) = A$, then $\ML(A[x;\delta]) = A$.
\end{lemma}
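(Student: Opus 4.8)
The plan is to mimic the proof of the commutative stability theorem (Theorem~\ref{thm:MLstablecomm}) and its noncommutative counterpart (Lemma~\ref{lemma:MLstablenoncomm}), adapting the degree/filtration argument so that it survives the presence of the $\sigma$-derivation $\delta$ (here $\sigma=\operatorname{id}$). Write $B = A[x;\delta]$, so multiplication in $B$ is governed by $xr = rx + \delta(r)$ for $r\in A$. Every element of $B$ has a unique expression $\sum_{i=0}^n a_i x^i$ with $a_i\in A$, giving a degree function $\deg_x$; the associated graded ring $\operatorname{gr}_x B$ is the ordinary (untwisted) polynomial ring $A[\bar x]$, since the twisting term $\delta(r)$ has strictly lower $x$-degree than the leading term. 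The inclusion $\ML(B)\subseteq A$ is the nontrivial direction; the reverse inclusion $A\subseteq \ML(B)$ holds because $A = \ker(\partial_x)$ where $\partial_x$ is the locally nilpotent derivation of $B$ sending $\sum a_i x^i \mapsto \sum i a_i x^{i-1}$ (one checks $\partial_x$ respects the Ore relation since $\delta$ is $\Bbbk$-linear).

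Here are the steps I would carry out for $\ML(B)\subseteq A$. First, let $D\in\LND(B)$ be arbitrary and let $0\neq a\in \ML(B)$; I want to show $a\in A$, i.e. $\deg_x a = 0$. Second, I would pass to the associated graded setting: a locally nilpotent derivation $D$ of $B$ need not be filtered, but one can consider its ``leading term'' or, following the Bell--Zhang argument, reduce to showing that an element of positive $x$-degree cannot lie in the common kernel. The cleanest route is to produce, for any $b\in B$ with $\deg_x b = n\ge 1$, an explicit LND of $B$ not annihilating $b$. The natural candidate is $\partial_x$ itself: $\partial_x(b)$ has $x$-degree $n-1$ and is nonzero when the leading coefficient $a_n$ is not a zero divisor — which holds since $A$ is a domain and hence $B$ is a domain. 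Thus $\partial_x$ alone already kills everything of positive $x$-degree, giving $\ML(B)\subseteq \ker(\partial_x) = A$ with essentially no use of the hypothesis $\ML(A)=A$.

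Third — and this is where the hypothesis $\ML(A)=A$ finally enters — I must upgrade $\ML(B)\subseteq A$ to $\ML(B) = A$, i.e. show every element of $A$ is killed by every $D\in\LND(B)$. Given $D\in\LND(B)$, its restriction need not preserve $A$, so the idea is: for $a\in A$, expand $D(a) = \sum_i c_i(a) x^i$ and study the top-degree behavior. Using local nilpotence of $D$ and the domain structure, one argues that the ``highest-degree part'' of $D$ induces a well-defined derivation-like operator on $\operatorname{gr}_x B = A[\bar x]$, which is again locally nilpotent; restricting to $A$ (the degree-zero part, which is preserved by the top-graded piece up to lower-order corrections) yields an element of $\LND(A)$, forcing it to vanish by rigidity of $A$. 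Iterating down through the filtration degrees then forces $D(a)=0$ outright. The main obstacle is precisely this third step: controlling how a non-filtered LND $D$ of $B$ interacts with the $x$-filtration, and ensuring that the induced operators on $\operatorname{gr}_x B$ genuinely land in $\LND(A)$ after restriction — the $\delta$-twist contributes lower-order terms that must be shown not to obstruct the induction. This is exactly the point where \cite[Lemma 5.3]{BHHV} does the real work, and I would follow their filtration bookkeeping closely, the only new ingredient over the $\delta = 0$ case being the verification that $\partial_x$ remains a well-defined LND and that $\operatorname{gr}_x B$ is still the untwisted polynomial ring so that the commutative/rigid machinery applies verbatim to the associated graded algebra.
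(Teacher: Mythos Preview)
Your overall plan matches the paper's proof idea: use the $x$-filtration, observe that $\operatorname{gr}_x B \cong A[\bar x]$ is untwisted, use $\partial_x$ for the easy containment, and for the hard containment study leading terms of $D(a)$ to extract an LND on $A$ from an arbitrary $D\in\LND(B)$, then invoke $\ML(A)=A$. That is exactly what the paper sketches (``analyze the leading terms of $\partial(a)$ for $a\in A$ with respect to the $x$-filtration \ldots\ a nonzero LND on $A[x;\delta]$ would induce a nonzero LND on $A$'').

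There is, however, a genuine slip in your first paragraph: you have the two containments reversed. The identity $A=\ker(\partial_x)$ gives $\ML(B)\subseteq\ker(\partial_x)=A$, because $\ML(B)$ is the \emph{intersection} of all LND kernels and $\partial_x$ is one of them; it does \emph{not} give $A\subseteq\ML(B)$. So your sentence ``the reverse inclusion $A\subseteq\ML(B)$ holds because $A=\ker(\partial_x)$'' is false as written. You effectively correct yourself in paragraphs two and three --- there you rightly say $\partial_x$ yields $\ML(B)\subseteq A$ with no use of rigidity, and that the substantive step is the other inclusion $A\subseteq\ML(B)$ --- so the mathematics of your plan is sound; only the labelling in the opening paragraph needs to be fixed.
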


\begin{proof}[Proof Idea]
The proof follows the ideas of Lemma \ref{lemma:MLstablenoncomm}. One considers a potential LND $\partial$ on $A[x;\delta]$ and analyzes the leading terms of $\partial(a)$ for $a \in A$ with respect to the $x$-filtration. The skew-Leibniz rule $x a = a x + \delta(a)$ introduces additional terms that must be tracked carefully, but the core case analysis showing that a nonzero LND on $A[x;\delta]$ would induce a nonzero LND on $A$ remains valid.
\end{proof}

An analogous result for extensions of automorphism type ($R[x;\sigma]$) is more subtle. The presence of $\sigma$ can create ``rigidifying'' effects. For instance, if $\sigma$ has infinite order and $R$ is a domain, often $\ML(R[x;\sigma]) = R$, mirroring the rigid case even if $R$ itself is not rigid.

\subsection{The Cancellation Theorems for Skew Extensions of Curves}

The most complete results in skew cancellation have been obtained for coefficient rings of Krull dimension one, generalizing the classical theorem of Abhyankar-Eakin-Heinzer and its noncommutative version (Theorem \ref{thm:nc-curves-cancel}). Here, the slice theorem and direct algebraic analysis complement the LND method.

The following theorem completely resolves the skew cancellation problem for commutative domains of dimension one, covering both automorphism and derivation-type extensions. It demonstrates that the determinative power of the polynomial/skew-polynomial construction is strong enough to overcome the added complexity of the skew relation.

\begin{theorem}(\cite[Propositions 5.2 and 5.6]{BHHV})\label{thm:skew-curves-comm}
Let $\Bbbk$ be a field and let $R$ and $S$ be affine commutative $\Bbbk$-domains of Krull dimension one.
\begin{enumerate}
    \item (Automorphism Type) If $R[x;\sigma] \cong S[y;\sigma']$, then $R \cong S$.
    \item (Derivation Type, $\operatorname{char}(\Bbbk)=0$) If $R[x;\delta] \cong S[y;\delta']$, then $R \cong S$.
\end{enumerate}
\end{theorem}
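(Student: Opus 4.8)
The plan is to recover each coefficient ring from the internal structure of its Ore extension, by an invariant-theoretic route in the derivation case and a ring-theoretic one in the automorphism case; in both, one-dimensionality of $R$ and $S$ is what makes the recovery possible, entering through the dichotomy of Theorem~\ref{thm:curves} in the first case and through the Dedekind-like ideal theory of $R$ in the second.

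\emph{Derivation type.} On $A=R[x;\delta]$ the formal $x$-derivative $\partial_x$ (with $\partial_x|_R=0$, $\partial_x(x)=1$) is a well-defined derivation — it respects $xr=rx+\delta(r)$ — it is locally nilpotent since $\partial_x^{\,n+1}$ kills everything of $x$-degree $\le n$, and $\ker\partial_x=R$ because $\operatorname{char}\Bbbk=0$; likewise $\ker\partial_y=S$ on $B=S[y;\delta']$. Given $\varphi\colon A\xrightarrow{\sim}B$, conjugation sends locally nilpotent derivations to locally nilpotent derivations, so $\varphi(\ML(A))=\ML(B)$, with $\ML(A)\subseteq R$ and $\ML(B)\subseteq S$. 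Apply Theorem~\ref{thm:curves} to $R$. If $R$ is rigid then $\ML(A)=R$ by Lemma~\ref{lem:ML-stable-skew-der}; if $S$ is rigid too then $\ML(B)=S$ likewise, whence $R=\ML(A)\cong\ML(B)=S$; and if instead $S\cong\Bbbk'[u]$, then $\delta'$ vanishes on $\Bbbk'$ (separability), so $\Bbbk'\subseteq Z(B)$ and thus $\Bbbk'\subseteq\ML(B)\subseteq\ker\partial_y=\Bbbk'[u]$. Since a short $y$-leading-coefficient computation in $B$ shows that any locally nilpotent derivation of $B$ not annihilating $u$ annihilates no nonconstant element of $\Bbbk'[u]$, one gets $\ML(B)=\Bbbk'$ or $\ML(B)=\Bbbk'[u]$, neither of which is a rigid one-dimensional domain — contradicting $\ML(B)\cong R$. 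Hence one of $R,S$ rigid forces both, and that case is done. In the remaining case $R\cong\Bbbk'[t]$ and $S\cong\Bbbk''[u]$; filtering $A$ by $x$-degree gives $\gr A\cong\Bbbk'[t,\bar x]$, a commutative polynomial domain, and a symbol argument identifies the set of elements of $A$ algebraic over $\Bbbk$ with $\Bbbk'$, and likewise $\Bbbk''$ for $B$; as $\varphi$ preserves this set, $\Bbbk'\cong\Bbbk''$ and $R\cong S$.

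\emph{Automorphism type (arbitrary characteristic).} No locally nilpotent derivation is available. If $\sigma=\operatorname{id}$ then $A=R[x]$ is commutative, hence so is $B$, forcing $\sigma'=\operatorname{id}$, and $R\cong S$ by the one-dimensional Abhyankar-Eakin-Heinzer theorem~\cite{AEH}. If $\sigma\neq\operatorname{id}$, the element $x\in A$ is normal ($xA=Ax$) and regular, $\bigcap_n x^nA=0$, and $A/xA\cong R$; the aim is to single out $xA$ intrinsically. The description of the two-sided primes of $R[x;\sigma]$ via the $\sigma$-prime ideals of $R$ shows that the only two-sided prime $P$ with $A/P$ a commutative domain of Krull dimension one is $xA$ itself (quotient $R$) or an ideal $\mathfrak{q}A$ with $\mathfrak{q}$ a $\sigma$-stable maximal ideal, in which case $A/\mathfrak{q}A\cong(R/\mathfrak{q})[x;\bar\sigma]$ is noncommutative unless $\bar\sigma=\operatorname{id}$, when it is a polynomial ring over a finite extension of $\Bbbk$. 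Hence if $R$ is not a polynomial ring, $xA$ is the unique two-sided prime with commutative non-polynomial quotient of Krull dimension one, so $R=A/xA$ is an isomorphism invariant of $A$; and if $R\cong\Bbbk'[t]$, one recovers $\Bbbk'$ as the set of elements of $A$ algebraic over $\Bbbk$ (here $\gr A\cong\Bbbk'[t][\bar x;\sigma]$ is a domain and the symbol argument still applies), concluding $R\cong\Bbbk'[t]\cong S$.

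\emph{Where the difficulty lies.} The derivation-type argument is complete modulo routine checks, the delicate points being the identification $\ker\partial_x=R$ through the skew-Leibniz rule and the leading-coefficient bookkeeping. The real work is the automorphism case: making the selection of $xA$ genuinely intrinsic — equivalently, showing that every coefficient ring $S'$ with $A\cong S'[y;\tau]$ is isomorphic to $R$ — requires the full analysis of the two-sided primes of $R[x;\sigma]$ and their quotients, together with, when $\sigma$ has finite order, the PI structure of $A$ as a module-finite algebra over its center $R^\sigma[x^{\,|\sigma|}]$. This step is the principal obstacle, and it is exactly where one-dimensionality of $R$ is used decisively.
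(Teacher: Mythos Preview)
Your route differs from the paper's proof sketch, which treats both parts uniformly through the center: compute $Z(R[x;\sigma,\delta])$ explicitly (in terms of $R^\sigma$ or the $\delta$-constants), use the isomorphism $Z(A)\cong Z(B)$, and then lift via the structure of $A$ as a module over its center. You instead run two distinct invariants. For the derivation case your ML-invariant argument is essentially correct and, fittingly, more aligned with the survey's own LND theme than the paper's center-based sketch; the degree function $\nu_D(a)=\max\{n:D^n(a)\neq 0\}$ is additive on products in a characteristic-zero domain (generalized Leibniz), which justifies the claim that an LND not annihilating $u$ annihilates no nonconstant element of $\Bbbk'[u]$, and together with Lemma~\ref{lem:ML-stable-skew-der} and Theorem~\ref{thm:curves} this closes that half. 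What your approach buys here is a direct application of the ML machinery the survey is built around; what the center route buys is uniformity across the two cases and availability in arbitrary characteristic.

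For the automorphism case your prime-ideal approach is plausible but, as you yourself acknowledge, incomplete. The assertion that the two-sided primes $P$ of $A=R[x;\sigma]$ with $A/P$ a commutative one-dimensional domain are exactly $xA$ and the $\mathfrak{q}A$ for $\sigma$-fixed maximal $\mathfrak{q}$ presupposes the full $\sigma$-prime correspondence for skew polynomial rings (cf.\ \cite{McConnellRobson1987}), and even granting that, one must separately treat primes lying over nonzero $\sigma$-prime ideals of $R$ that are not maximal, and distinguish the finite-order case (where $A$ is PI over $R^\sigma[x^{|\sigma|}]$) from the infinite-order case (where $Z(A)=R^\sigma$ may collapse to a field and the module-finite structure is unavailable). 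The paper's center-based strategy sidesteps this prime-ideal bookkeeping by working inside the commutative ring $Z(A)$ from the outset. Your outline could likely be completed, but the gap you flag is real: the intrinsic characterization of $xA$ among all two-sided primes is the substance of the automorphism-type argument, not a routine verification.
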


\begin{proof}[Proof Idea]
The proofs for both cases share a common strategy but differ in execution:
\begin{enumerate}
    \item \textbf{Reduction to the Center}: If $T := R[x;\sigma,\delta] \cong T' := S[y;\sigma',\delta']$, their centers $Z(T)$ and $Z(T')$ are isomorphic. For algebras of this type, the center can be explicitly described in terms of invariants of $R$ and $S$ under $\sigma$ and $\delta$.
    \item \textbf{Analysis of the Center}: One shows that $Z(T)$ is a finitely generated $\Bbbk$-algebra of Krull dimension one containing $R^\sigma$ (the $\sigma$-invariants of $R$). A detailed study reveals that $Z(T)$ being isomorphic to $Z(T')$ forces a strong relationship between $R$ and $S$.
    \item \textbf{Lifting the Isomorphism}: Using the structure of $T$ as a free module over its center and the original isomorphism $T \cong T'$, one constructs an isomorphism between $R$ and $S$. In the derivation case ($\sigma=\operatorname{id}$), this step can often employ a specialization argument or directly utilize the slice theorem if a suitable LND exists.
\end{enumerate}
\end{proof}

This theorem is significant because it shows that the \emph{form} of the 
extension (skew polynomial) can sometimes be so restrictive that it completely 
determines the base ring, regardless of the specific twisting maps $\sigma$ 
and $\delta$. The proof techniques combine center-based arguments with the 
slice theorem, illustrating how classical tools can be adapted to the skew 
setting when the base algebra is sufficiently simple (dimension one).

For base rings of Krull or GK-dimension one, the skew cancellation problem 
is now well understood: both automorphism-type and derivation-type extensions 
are cancellative. The techniques used—analysis of centers, the slice theorem, 
and ML-invariant arguments—successfully bridge the gap between the commutative 
and noncommutative theories.

However, the situation in higher dimensions remains wide open. Unlike the 
commutative and noncommutative settings, where ML-stability provides a 
reliable foundation, the skew case presents fundamental new challenges. 
The twisting maps $(\sigma,\delta)$ interact with the LND structure in 
complex ways that obstruct straightforward generalization of existing methods.

For a comprehensive discussion of these challenges, including concrete 
examples of where ML-stability fails, the fundamental open questions in 
skew cancellation, and promising research directions, see Section~\ref{xxsec5}.


\section{Open Problems and Future Directions}
\label{xxsec5}

The preceding sections have traced the evolution of the locally nilpotent derivation methodology from its commutative origins through its noncommutative generalization to the skew setting. While this approach has achieved complete success in low dimensions—resolving cancellation for commutative curves and surfaces, all noncommutative algebras of GK-dimension one and two, and skew extensions of dimension-one domains—significant challenges remain in higher dimensions and more general skew contexts. This final section consolidates the open problems that emerge naturally from our LND-centric perspective, highlights the technical obstacles that prevent straightforward generalization of existing techniques, and identifies promising research directions for future work.

\subsection{The Noncommutative Frontier}

While the rigidity criterion provides a clean answer for many algebras, it is well known that cancellativity does not always coincide with LND-rigidity. This has led to a broad effort to understand the extent to which locally nilpotent derivations, the Makar-Limanov invariant, and their noncommutative analogues can be used to detect cancellation in more general situations.

A natural next step after the complete resolution of the cancellation problem for algebras of Gelfand--Kirillov dimension one and two is to investigate higher-dimensional cases. In GK-dimension one, every affine domain is cancellative \cite{BHHV}, and in GK-dimension two, every noncommutative affine domain is LND-rigid and therefore cancellative \cite{BellZhang2017}. These results stand in sharp contrast with the commutative situation, where the two-dimensional case requires sophisticated geometric tools \cite{ Fu,MS} and the three-dimensional case remains open. Thus, within the noncommutative framework, the behavior in low dimensions is by now well understood.

\subsubsection*{Progress in higher dimensions}

For higher-dimensional Artin--Schelter regular algebras, however, the picture becomes significantly more subtle. Partial progress has been achieved in several important families. In global dimensions three and four, cancellation has been established for certain quantized coordinate rings, Sklyanin algebras, and other AS-regular algebras \cite{ LMZ,TVZ}. The techniques developed in these works go substantially beyond classical LND methods: they incorporate homological tools such as Nakayama automorphisms, homological determinants, and $A_\infty$-structures \cite{LPWZ, MU1, MU2}, as well as discriminant and Poisson-theoretic techniques that control automorphism groups and detect hidden symmetries \cite{CYZ1, CYZ2, CPWZ1, CPWZ2,NTY}. These methods have been particularly effective in settings where LND-rigidity alone is insufficient to determine cancellativity, illustrating the need for a broader invariant framework.

Despite these advances, a general characterization of cancellativity for higher-dimensional AS-regular algebras remains unknown. This leads naturally to one of the central open questions in the field:

\begin{question}[{\cite[Question 0.3]{BellZhang2017}}]
\label{q:nc-cancel}
Let $A$ be a noncommutative noetherian Artin--Schelter regular algebra. When is $A$ cancellative?
\end{question}

Progress toward a complete solution to Question~\ref{q:nc-cancel} will likely require new techniques and a deeper synthesis of existing ones. Three directions appear particularly promising:

\begin{enumerate}

\item \textbf{Integration of homological and LND-based invariants.}
Recent work has shown that homological determinants, Nakayama automorphisms, and discriminant-based invariants can detect cancellativity in cases where the ML-invariant alone is insufficient \cite{ CPWZ1, CPWZ2,GKM,GWY2, LeWZ}. A refined understanding of the interaction between these homological tools and the Makar-Limanov invariant could yield a more comprehensive cancellation criterion. The key challenge is to understand when homological rigidity implies or contradicts LND-rigidity.

\item \textbf{Deformation theory and Poisson geometry.}
For algebras arising as quantizations of Poisson structures, deformation-theoretic techniques can relate the cancellativity of a noncommutative algebra to that of its semiclassical limit \cite{GW}. This perspective suggests that the cancellation problem for quantum algebras might be attacked by first understanding cancellation for the corresponding Poisson algebras, then lifting results through deformation theory. The recent development of cancellation theory for Poisson algebras \cite{GW, GWY1} makes this approach increasingly viable.
\end{enumerate}

These approaches are not mutually exclusive; rather, they suggest that the structural features governing cancellation in higher dimensions are inherently multifaceted, involving a synthesis of homological, Poisson, and deformation-theoretic aspects alongside the classical LND methodology.

Taken together, these results show that while low-dimensional noncommutative
cancellation is now essentially settled, higher dimensions demand tools that go
well beyond the classical LND framework.

\subsection{The Skew Challenge}

The skew cancellation problem for higher-dimensional base rings $R$ remains widely open and poses challenges that reach beyond the classical LND framework. Unlike the commutative case, where locally nilpotent derivations and the Makar--Limanov invariant provide a powerful mechanism for detecting rigidity and cancellation, the behavior of skew polynomial extensions
\[
    R[x;\sigma,\delta]
\]
depends intricately on the twisting maps $(\sigma,\delta)$, and no general invariant is yet known to govern cancellativity in this setting.

One major difficulty is the absence of a satisfactory analogue of the Makar--Limanov invariant that interacts effectively with the skew structure. While in special situations one may transfer LNDs from $R$ to $R[x;\sigma,\delta]$ or use higher derivation techniques, there is currently no unified theory explaining when such derivations detect cancellation, nor any general criterion analogous to the rigidity theorem in the untwisted case. 

The root of this difficulty is that ML-stability—a cornerstone of both the commutative and noncommutative theories—can fail dramatically in the skew setting. We illustrate this with a concrete example.

\begin{example}[ML-invariant depends on the derivation]
\label{ex:skew-obstruction}
Consider $R = \Bbbk[y]$ and form skew extensions with different derivations:

\textbf{Case 1}: $\delta = 0$ (commutative polynomial ring)\\
Then $R[x;0] = \Bbbk[x,y]$ and $\ML(R[x;0]) = \Bbbk$ (see Example~\ref{ex:affine-plane-concise}).

\textbf{Case 2}: $\delta = d/dy$ (Weyl algebra type)\\  
Then $R[x; d/dy] \cong A_1(\Bbbk)$ (first Weyl algebra) and $\ML(A_1) = \Bbbk$.

\textbf{Case 3}: $\delta = y \cdot d/dy$ (homogeneous derivation)\\
Define $T = \Bbbk[y][x; \text{id}, y \cdot d/dy]$ where $xy = yx + y^2$.

\textbf{Claim}: $\ML(T) = \Bbbk[y]$.

\begin{proof}
The derivation $\partial/\partial x$ is an LND with $\ker(\partial/\partial x) = \Bbbk[y]$ (standard differentiation in $x$, treating $y$ as constant).

Suppose there exists an LND $\delta$ with $\delta(y) \neq 0$. Such a derivation must satisfy:
\begin{align*}
\delta(xy) &= \delta(yx + y^2)\\
\delta(x)y + x\delta(y) &= \delta(y)x + y\delta(x) + 2y\delta(y)\\
[x, \delta(y)] &= 2y\delta(y).
\end{align*}

If $\delta(y) = f(y)$ for some polynomial $f \in \Bbbk[y]$, then using the commutation relation $xy - yx = y^2$:
\begin{align*}
xf(y) - f(y)x &= 2yf(y)\\
y^2f'(y) &= 2yf(y)\\
yf'(y) &= 2f(y).
\end{align*}

This differential equation forces $f(y) = cy^2$ for some $c \in \Bbbk$.

But if $\delta(y) = cy^2$, then:
\begin{align*}
\delta^2(y) &= \delta(cy^2) = 2cy\delta(y) = 2cy \cdot cy^2 = 2c^2y^3\\
\delta^3(y) &= 6c^3y^4\\
\delta^n(y) &= n! c^n y^{n+1} \neq 0 \text{ for all } n.
\end{align*}

This contradicts local nilpotence! Therefore, no such $\delta$ exists, and $\ML(T) = \Bbbk[y] \neq \Bbbk$.
\end{proof}
\end{example}

\begin{remark}
Example~\ref{ex:skew-obstruction} demonstrates that the ML-invariant varies \emph{dramatically} with the choice of $\delta$, even when $\sigma = \text{id}$ and the base ring $R$ is the same. This stands in sharp contrast with the commutative case (Section~\ref{sec:LND-comm}), where the ML-invariant depends only on the algebra structure itself, and with the noncommutative case (Section~\ref{xxsec3}), where ML-stability holds for Ore domains.

The fundamental issue: Unlike Theorems~\ref{thm:MLstablecomm} and \ref{lemma:MLstablenoncomm} (ML-stability for commutative and noncommutative polynomial extensions), there is NO universal ML-stability for arbitrary skew extensions. The twisting data $(\sigma,\delta)$ fundamentally alters the invariant.
\end{remark}

Despite these obstacles, notable progress has been made for particular classes of skew polynomial extensions. For example, cancellation has been verified for various quantized coordinate rings and their skew extensions \cite{LMZ,LeWZ,TZZ}, often through discriminant methods, homological tools, or Poisson-theoretic arguments that bypass the absence of LNDs. Iterated Ore extensions—a fundamental class of algebras arising in quantum groups and higher-dimensional Artin--Schelter regular algebras—remain especially promising but largely unexplored from the viewpoint of cancellation.

These developments lead naturally to the fundamental open question in the area:

\begin{question}[The Skew Cancellation Problem]
\label{q:skew-cancel}
For which classes of algebras $R$ and twisting maps $(\sigma,\delta)$ does
\[
    R[x;\sigma,\delta] \;\cong\; S[y;\sigma',\delta']
\]
imply that $R \cong S$?
\end{question}

Current evidence suggests that answering Question~\ref{q:skew-cancel} will require tools extending well beyond the classical LND framework. Three directions appear particularly promising:

\begin{enumerate}
\item \textbf{$\sigma$-twisted derivations and adapted $\mathbb{G}_a$-actions.}
Developing a conceptual theory of ``$\sigma$-twisted'' locally nilpotent derivations or $\mathbb{G}_a$-actions adapted to skew polynomial rings, capable of detecting rigidity in the presence of nontrivial automorphisms. Such a theory would need to account for how the twisting map $\sigma$ interacts with the derivation structure, potentially leading to a modified version of the ML-invariant that remains stable under skew extensions.

\item \textbf{Morita skew cancellation.}
Investigating \emph{Morita skew cancellation}, where the isomorphism of extensions is replaced by a Morita equivalence. This weakening of the cancellation question has already yielded interesting results \cite{ LuWZ,TZZ} and may provide insights into the structure of skew extensions even when strict isomorphism-based cancellation fails.

\item \textbf{Graded techniques and reduction to associated graded algebras.}
Employing graded techniques and associated graded rings with respect to the $x$-filtration. This approach allows partial reduction to the untwisted case: if $R[x;\sigma,\delta]$ carries a suitable filtration whose associated graded algebra is $R[x;\sigma']$ (the honest polynomial ring), then LND methods may become available after passing to the associated graded algebra. The challenge is to lift results from the associated graded back to the original skew extension.
\end{enumerate}

\subsection{Synthesis: A Unified Perspective}

Stepping back from the technical details, we can identify common threads and fundamental divergences across the three settings studied in this survey.

The following table summarizes the current state of knowledge regarding key properties and techniques in each setting:

\begin{table}[h]
\centering
\renewcommand{\arraystretch}{1.3}
\begin{tabular}{p{4cm}p{3cm}p{3cm}p{3cm}}
\toprule
\textbf{Property/Technique} & \textbf{Commutative} & \textbf{Noncommutative} & \textbf{Skew} \\
\midrule
\textbf{ML-stability} & Yes (conjectured general, proven for rigid) & Yes (for Ore domains) & \textbf{No} (depends on $(\sigma,\delta)$) \\[0.3em]
\midrule
\textbf{Rigidity $\Rightarrow$ cancellation} & Yes & Yes (GK-dim finite) & Unknown (even for rigid $R$) \\[0.3em]
\midrule
\textbf{Dimension 1 solved} & Yes (classical) & Yes \cite{BHHV} & Yes \cite{BHHV} \\[0.3em]
\midrule
\textbf{Dimension 2 solved} & Yes \cite{Fu,MS,Ru} & Yes \cite{BellZhang2017} & Open for general $(\sigma,\delta)$ \\[0.3em]
\midrule
\textbf{Higher dimensions} & Open ($n \geq 3$) & Partial (some AS-regular) & Wide open \\[0.3em]
\midrule
\textbf{Geometric interpretation} & $\mathbb{G}_a$-actions & Limited & Unclear \\[0.3em]
\bottomrule
\end{tabular}
\caption{Comparative status of cancellation problems across three settings}
\label{tab:comparison}
\end{table}

\subsubsection*{Common threads}

Despite the differences highlighted in Table~\ref{tab:comparison}, several unifying themes emerge:

\begin{enumerate}
\item \textbf{The power of dimension one.} In all three settings, cancellation in dimension one is completely understood and affirmative. The slice theorem and direct analysis of LNDs suffice. This suggests that dimension one is a natural ``ground case'' where algebraic structure strongly constrains geometry.

\item \textbf{The LND paradigm as a unifying principle.} Even where it does not provide complete answers, the LND methodology offers a conceptual framework for understanding cancellation. The success of this approach in dimensions one and two (both commutative and noncommutative) demonstrates its fundamental relevance.

\item \textbf{The need for invariant synthesis.} In higher dimensions and more general settings, no single invariant suffices. Success requires combining the ML-invariant with homological determinants, discriminants, Poisson structures, and other tools. The challenge is to understand how these invariants interact and complement each other.
\end{enumerate}

\subsubsection*{Fundamental divergences}

Equally important are the points where the three theories diverge:

\begin{enumerate}
\item \textbf{Stability of the ML-invariant.} The commutative and noncommutative theories both enjoy ML-stability (under appropriate hypotheses), making the ML-invariant a reliable tool. In the skew setting, this stability fails (Example~\ref{ex:skew-obstruction}), fundamentally limiting the applicability of classical techniques.

\item \textbf{Rigidity patterns.} Commutative dimension-two surfaces exhibit a flexible-rigid dichotomy with the flexible case completely classified (Theorem~\ref{thm:surface-dichotomy}). Noncommutative surfaces are generically rigid. Skew extensions display neither pattern: rigidity depends sensitively on the twisting data.

\item \textbf{Role of commutativity.} In the commutative case, geometric intuition (affine varieties, $\mathbb{G}_a$-actions) guides the theory. In the noncommutative case, noncommutativity itself often enforces rigidity, simplifying the problem. In the skew case, the interplay between noncommutativity and twisting creates new phenomena that resist both geometric and purely algebraic approaches.
\end{enumerate}

The comparative analysis above suggests that a complete understanding of cancellation across all three settings will require:

\begin{itemize}
\item \textbf{New invariants for skew extensions.} The failure of ML-stability in the skew setting demands the development of new invariants adapted to the twisted structure. These might involve $\sigma$-twisted versions of classical constructions or entirely new algebraic tools.

\item \textbf{Cross-fertilization of techniques.} Homological methods developed for noncommutative algebras, geometric insights from the commutative theory, and emerging Poisson-theoretic tools must be synthesized. Each setting has unique strengths that may illuminate the others.

\item \textbf{Systematic study of ``boundary cases.''} Examples like the Danielewski surfaces  and the skew extension with $\delta = y \cdot d/dy$ (Example~\ref{ex:skew-obstruction}) reveal rich structure at the boundaries between flexibility and rigidity. A comprehensive theory must account for these subtle transitional phenomena.
\end{itemize}

\subsection*{Concluding Remarks}

The study of cancellation—from the classical commutative setting to its noncommutative and skew analogues—reveals a unifying structural theme. Locally nilpotent derivations and their associated invariants remain central in detecting rigidity and determining when an algebra is uniquely recovered from a polynomial extension.

In the commutative case, the Makar--Limanov invariant has played a decisive role in understanding cancellation for affine varieties of low dimension, particularly in the work of Makar-Limanov, Crachiola, Kaliman and Daigle \cite{Crachiola2009,CM2,CM,Daigle1996, Da,KalimanMakarLimanov1997,KalimanMakarLimanov2007,MakarLimanov2001,ML}. In higher dimensions, the classical Zariski Cancellation Problem remains open in characteristic zero for $n\geq 3$ \cite{Gu3}, highlighting the limits of current techniques.

On the noncommutative side, the framework initiated by Bell and Zhang \cite{BellZhang2017} established that rigidity also controls cancellation for large families of algebras, including all affine domains of GK-dimension one and two \cite{BHHV, BellZhang2017}. Despite substantial progress for AS-regular algebras of global dimension three and four \cite{TVZ, LMZ}, Question~\ref{q:nc-cancel}—whether every noetherian AS-regular algebra is cancellative—remains a central open problem.

The skew setting presents even broader challenges. Recent advances on skew and Ore extensions \cite{TZZ, LuWZ} indicate that derivation-based techniques, stability lemmas, and discriminant methods continue to provide significant insight. However, as Example~\ref{ex:skew-obstruction} demonstrates, a general invariant-theoretic framework comparable to the classical ML-invariant is still lacking, leaving Question~\ref{q:skew-cancel} as an important open direction for future research.

Overall, these developments suggest that a unified approach to cancellation will require combining derivation techniques with homological, deformation-theoretic, and Poisson-geometric methods. The persistence of long-standing open problems underscores the depth of the subject and the growing need for new tools capable of bridging the commutative, noncommutative, and skew worlds. It is our hope that this survey, by presenting these problems through the unifying lens of locally nilpotent derivations, will inspire the development of such tools and contribute to further progress in this beautiful and challenging area of algebra.

\subsection*{Acknowledgments}The authors gratefully acknowledge Universidad Militar Nueva Granada for providing an appropriate academic environment and institutional support for the development of this work.






\providecommand{\bysame}{\leavevmode\hbox to3em{\hrulefill}\thinspace}
\providecommand{\MR}{\relax\ifhmode\unskip\space\fi MR }
\providecommand{\MRhref}[2]{%

\href{http://www.ams.org/mathscinet-getitem?mr=#1}{#2} }
\providecommand{\href}[2]{#2}

\end{document}